\theoremstyle{plain}
\newtheorem{theorem}{Theorem}[section]
\newtheorem{thmy}{Theorem}
\newenvironment{thmx}{\stepcounter{theorem}\begin{thmy}}{\end{thmy}}
\newtheorem{lemma}[theorem]{Lemma}
\newtheorem{prop}[theorem]{Proposition}
\newtheorem{cor}[theorem]{Corollary}
\theoremstyle{definition}
\newtheorem{definition}[theorem]{Definition}
\newtheorem{example}[theorem]{Example}
\theoremstyle{remark}
\newtheorem*{remark}{Remark}
\renewcommand{\ss}{\mathfrak{s}}
\newcommand{\spinc}{Spin^{c}}
\numberwithin{equation}{section}
\DeclareMathOperator{\symg}{\textsf{Sym}^{g}(\Sigma_{g})}
\renewcommand{\ss}{\mathfrak{s}}
\newcommand{\aalpha}{\bm{\alpha}}
\newcommand{\bbeta}{\bm{\beta}}
\newcommand{\ddelta}{\bm{\delta}}
\newcommand{\ggama}{\bm{\gamma}}
\newcommand{\Ttheta}{\bm{\Theta}}
\newcommand{\ttau}{\bm{\tau}}
\newcommand{\zzeta}{\bm{\zeta}}
\newcommand{\xxi}{\bm{\xi}}
\newcommand{\ta}{\mathbb{T}_{\aalpha}}
\newcommand{\tb}{\mathbb{T}_{\bbeta}}
\newcommand{\xx}{\mathbf{x}}
\newcommand{\yy}{\mathbf{y}}
\newcommand{\cc}{\mathfrak{c}}
\newcommand{\Z}{\mathbb{Z}}
\newcommand{\F}{\mathbb{F}_{2}}
\newcommand{\R}{\mathbb{R}}
\newcommand{\D}{\mathfrak{D}}
\newcommand{\Dbar}{\underline{\mathfrak{D}}}
\newcommand{\Sigmabar}{\underline{\Sigma}}
\renewcommand{\aa}{\mathfrak{a}}
\newcommand{\aalphabar}{\underline{\aalpha}}
\newcommand{\bb}{\mathfrak{b}}
\newcommand{\bbetabar}{\underline{\bbeta}}
\newcommand{\ggamabar}{\underline{\ggama}}
\newcommand{\tg}{\mathbb{T}_{\ggama}}
\newcommand{\reduced}{\textsf{red}}
\newcommand{\disk}{\mathbb{D}^{2}}
\newcommand{\Xbar}{\underline{X}}
\renewcommand{\L}{\mathbb{L}}
\newcommand{\tabar}{\mathbb{T}_{\aalphabar}}
\newcommand{\tgbar}{\mathbb{T}_{\ggamabar}}
\newcommand{\tbbar}{\mathbb{T}_{\bbetabar}}
\renewcommand{\H}{\mathcal{H}}
\newcommand{\conn}{\EuScript{H}}
\newcommand{\crit}{\textsf{Crit}}
\newcommand{\comment}[1]{}
\DeclareMathOperator{\coker}{coker}
\definecolor{RED}{RGB}{200,0,0}
\definecolor{BLUE}{RGB}{0,0,200}
\definecolor{GREEN}{RGB}{0,100,0}
\numberwithin{equation}{section}
\title{Trisections and Ozsv\'ath-Szab\'o Cobordism Invariants}
\author{William E. Olsen}
\date{\today}
\begin{document}
\maketitle

\begin{abstract}
Given a smooth, compact four-manifold $X$ viewed as a cobordism from the empty set to its connected boundary, we demonstrate how to use the data of a trisection map $\pi:X^{4} \to \mathbb{R}^{2}$ to compute the induced cobordism maps on Heegaard Floer homology associated to $X$.
\end{abstract}

\setcounter{tocdepth}{1}
\tableofcontents

\section{Introduction}
\label{sec:Introduction}

A new tool in smooth four-manifold topology has recently been introduced under the name of \emph{trisected Morse $2$-functions} (or \emph{trisections} for short) by Gay and Kirby \cite{gay2016trisecting}. Recent developments in this area demonstrate rich connections and applications to other aspects of four-manifold topology, including a new approach to studying symplectic manifolds and their embedded submanifolds \cite{lambert2020symplectic, lambert2019symplectic, lambert2018bridge}, and to surface knots (embedded in $S^{4}$ and other more general $4$-manifolds) \cite{meier2017bridge, meier2018bridge} along with associated surgery operations \cite{gay2018doubly, kim2020trisections}.

Of particular interest to the trisection community is the construction of new \cite{kirby2018new, castro2019relative} and the adaptation of established invariants in the trisection framework. In this article, we are concerned with the latter as we endeavor to demonstrate a technique for computing the Heegaard Floer cobordism maps from the data of a relative $(g,k;p,b)$-trisection map (see Sections \ref{sec:back-HF} and \ref{sec:Background-Trisections} for definitions). Our main result can be summarized as follows (see Section \ref{sec:computing-rel-invts} for more precise statements):

\begin{thmx}
\label{thm:Main}
  Fix a smooth, connected, oriented, compact four-manifold $X$ with connected boundary $\partial X = Y$, and let $\pi: X \to \mathbb{R}^{2}$ be a (relative) trisection map. Using $\pi$ as input data, one can recover the induced cobordism maps in Heegaard Floer homology
  \begin{equation}
      \label{eq:cob-map}
  F^{\circ}_{X, \ss}: HF^{\circ}(S^{3}) \to HF^{\circ}(Y, \ss|_{Y}),
  \end{equation}
  where $X$ is viewed as a cobordism from $S^{3}$ to $Y$ after removing the interior of a small ball, and $\circ \in \{+, -, \infty, \wedge\}$ are the variants defined in \cite{ozsvath2004holomorphic}.
\end{thmx}

Theorem \ref{thm:Main} has a few interesting characteristics and implications that may be worth mentioning. The first is that Ozsv\'ath and Szab\'o prove that their induced maps are smooth invariants of the underlying cobordism \cite[Theorem 1.1]{ozsvath2006holomorphic} which implies that Theorem \ref{thm:Main} may be used in the detection of exotic phenomena.  For example, following the usual Mayer-Vietoris strategy found in Floer homology theories, it is theoretically possible to use Theorem \ref{thm:Main} to recover the \emph{mixed invariants} \cite[Theorem 9.1]{ozsvath2006holomorphic} of a closed four-manifold $X$ with $b_{2}^{+}(X) > 1$. However, we warn the reader that, practically speaking, using Theorem \ref{thm:Main} to compute the mixed invariants by hand in any particular example remains a daunting task due to the general unruliness of pseudo-holomorphic curves.

The second feature we'd like to highlight, and perhaps most important in the author's opinion, is that Theorem \ref{thm:Main} makes no reference to a handle decomposition of the underlying four-manifold. Instead, the theorem takes as input data a (definite) broken fibration (which has been isotoped into a special form) and manages to return the Heegaard Floer cobordism maps as output. As such, Theorem \ref{thm:Main} may give some insight into how one might compare the relative invariants arising in different Floer homology theories--most notably a comparison between the Oszv\'ath-Szab\'o mixed invariants and Perutz's Lagrangian matching invariants \cite{perutz2007lagrangian, perutz2008lagrangian}.

With these preliminary remarks in place, we quickly summarize the proof of Theorem \ref{thm:Main}. In brief, a trisection map $\pi: X \to \mathbb{R}^{2}$ is a singular fibration over the disk whose singular set is of a prescribed type (assuming $X$ has non-empty boundary, the singular set has indefinite folds/cusps, none of which intersect the boundary). The central fiber (preimage of $(0,0)$ under $\pi$) is a genus $g$ surface with $b > 0$ boundary components and is decorated with three sets of `vanishing cycles', colored red, blue, and green. A fundamental result of Gay and Kirby \cite{gay2016trisecting} is that $\pi$ induces an open book decomposition of the boundary $3$-manifold $Y = \partial X$ whose monodromy can be recovered by flowing a regular fiber once around the boundary of the disk (after choosing the appropriate auxiliary data, such as a metric and compatible connection). By starting at the central fiber, flowing to the boundary, once around, and then back to the center\footnote{Although we don't directly study holomorphic sections of $\pi$, the author was deeply inspired by the constructions of \emph{Lagrangian boundary conditions} found in \cite{seidel2008fukaya} and \cite{perutz2007lagrangian}. The approach taken here is meant to be reminiscent of these constructions.}, one obtains a Heegaard triple which is slide-equivalent to one which is subordinate to a bouquet for a framed link as in \cite{ozsvath2006holomorphic}--diagrammatically, the result is a \emph{closed} surface decorated by three complete sets of attaching curves and a canonical choice of basepoint. Theorem \ref{thm:Main} then follows via the usual naturality considerations. 

\subsection*{Organization}
\label{subsec:organization}

This note is organized as follows. In Sections \ref{sec:Background-Trisections} and \ref{sec:back-HF}, we briefly review the necessary background behind trisected Morse-$2$ functions and the induced cobordism maps in Heegaard Floer homology. The heart of the paper is found in Section \ref{sec:computing-rel-invts} where we give the details of the construction outlined above for how to use the data of a relative trisection map to compute the Ozsv\'ath-Szab\'o cobordism maps. In the last section we comment on the role of the contact class \cite{ozsvath2005heegaard, honda2009contact} in our setup.

\subsection*{Acknowledgements}
This work would not have been possible without the insight and generous support of my Ph.D. advisor, David Gay. Also I'd like to thank Juanita Pinz\'on-Caicedo and John Baldwin for their interest in this project. Finally, I would like to thank the Max Planck Institute for Mathematics in Bonn, Germany, for hostimg me while I worked towards completing this project.

\newpage

    \section{Trisections of four-manifolds}
    \label{sec:Background-Trisections}

The literature is rich with helpful and insightful constructions of the trisection theory. For this reason, we only briefly review its foundational material and point the interested reader elsewhere for a less terse introduction. For a general overview of trisections and direct comparisons with the more familiar description of four-manifolds via handle decompositions and Kirby calculus, we recommend the original \cite{gay2016trisecting} and the more recent survey \cite{gay2019heegaard}. For interesting examples of trisections and their diagrams, including descriptions for various surgery operations such as the Gluck twist and its variants, we recommend \cite{kim2020trisections, lambert2018bridge, gay2018doubly, aranda2019diagrams} and \cite{koenig2017trisections}. For a broader perspective on stable maps from four-manifolds to surfaces, including details about how to simplify the topology of such maps, we suggest \cite{gay2015indefinite, gay2012reconstructing, baykur2017simplifying} and the references therein.

\subsection{Heegaard splittings and the essentials of diagrammatic representations of manifolds}
\label{subsec:relative-trisection-diagrams}

In this first section, we establish a vocabulary and notation for discussing the diagrammatic representations of $3$- and $4$-manifolds which are prevalent throughout. When possible, we closely follow the terminology and notation of \cite[Section 2.2]{gay2018doubly} and \cite[Section 2]{juhasz2012naturality}.

We start with the essentials:
\begin{itemize}
    \item $\Sigma_{g, b}$ is a compact, oriented, connected surface of genus $g$ with $b$ boundary components.
    \item $\ddelta = \{\delta_{1}, \ldots, \delta_{g-p}\} \subset \Sigma_{g,b}$ is a \emph{genus $p$ cut system}, i.e. a collection of disjoint simple closed curves which collectively $\Sigma_{g,b}$ into a connected genus $p$ surface. In symbols, $\Sigma_{g,b} \setminus \cup_{i} \delta_{i} \cong \Sigma_{p,b}$.
    \item Two genus $p$ cut systems $\ddelta$ and $\ddelta'$ are said to be \emph{slide-equivalent} if there exists a sequence of handle-slides taking $\ddelta$ to $\ddelta'$ (see \cite[Section 2]{juhasz2012naturality} for a picture of a handle-slide). Moreover, $\ddelta$ and $\ddelta'$ are said to be \emph{slide-diffeomorphic} if there exists a diffeomorphism $\phi: \Sigma \to \Sigma$ such that $\phi(\ddelta)$ is slide-equivalant to $\ddelta'$.
\end{itemize}

An important fact that we'll use repeatedly is the following\footnote{Strictly speaking, we should be more careful about distinguishing genus $p$ cut systems from \emph{isotopy classes} of such--this point of view is taken in \cite{juhasz2012naturality}. The interested reader is also pointed there for more details about (sutured) compression bodies.}: a genus $p$ cut system on $\Sigma_{g,b}$ determines (up to diffeomorphism rel. boundary) a \emph{compression body} $C_{\ddelta}$ which is the cobordism obtained from $I \times \Sigma_{g,b}$ by attaching three-dimensional $2$-handles to $\{1\} \times \Sigma_{g,b}$ along the curves $\{1\} \times \ddelta$. Moreover, any two such compression bodies $C_{\ddelta}$, $C_{\ddelta'}$ are diffeomorphic rel. boundary if and only if $\ddelta, \ddelta'$ are slide-diffeomorphic.

\begin{definition}
Let $Y$ be a connected, oriented $3$-manifold which may or may not have boundary. An \emph{embedded genus-$g$ Heegaard diagram} for $Y$ is a triple $(\Sigma_{g,b}, \aalpha, \bbeta)$ where $\aalpha, \bbeta$ are genus $p$ cut systems on $\Sigma_{g,b}$ which respectively bound compressing disks on either side of $\Sigma_{g,b}$. If $\partial Y = \emptyset$, then $p = g$.
\end{definition}

The relevance of the above definitions is that if $(\Sigma_{g,b}, \aalpha', \bbeta')$ is another abstract\footnote{There is a great deal of subtlety when comparing `abstract' diagrams, by which we mean a picture of a surface decorated with colored collections of curves, and `embedded' diagrams, as we've defined above. For more on this subtlety and how its relevant to the Heegaard Floer computatinos which come later, we recommend \cite{juhasz2012naturality} for an excellent discussion and examples.} diagram with $\aalpha \sim \aalpha'$ and $\bbeta \sim \bbeta'$ slide-equivalent, then $(\Sigma_{g,b}, \aalpha', \bbeta')$ is also a Heegaard diagram for $Y$. Every oriented, connected $3$-manifold $Y$ admits an embedded Heegaard diagram.

In fact, up to diffeomorphism rel. $\Sigma_{g,b}$, an abstract diagram $(\Sigma_{g,b}, \aalpha, \bbeta)$ determines a $3$-manifold $Y$ via the following procedure: start with $[-1,1] \times \Sigma_{g,b}$ and attach $2$-handles to $\{-1\} \times \Sigma_{g,b}$ along the curves $\{-1\} \times \aalpha$, and similarly attach $2$-handles to $\{1\} \times \Sigma_{g,b}$ along $\{1\} \times \bbeta$. For more details about the precise relationship between abstract diagrams, embedded diagrams, and their associated classes of $3$-manifolds, we refer the reader to \cite[Section 2]{juhasz2012naturality}.

\begin{definition}
\label{def:Heegaard-Triple}
A \emph{Heegaard triple} is a $4$-tuple $(\Sigma_{g}, \aalpha, \bbeta, \ggama)$ where $\Sigma_{g}$ is a closed, oriented surface and each of $\aalpha, \bbeta$, and $\ggama$ are genus $g$ cut systems on $\Sigma_{g}$.
\end{definition}

Just as (embedded) Heegaard diagrams determine a smooth $3$-manifold up to diffeomorphism rel. $\Sigma$, a Heegaard triple determines (up to diffeomorphism) a smooth $4$-manifold. To this end, let $H = (\Sigma, \aalpha, \bbeta, \ggama)$ be a Heegaard triple. In \cite[Section 8]{ozsvath2004holomorphic}, Ozsv\'ath and Szab\'o associate to $H$ a four-manifold $X_{\aalpha, \bbeta, \ggama}$ via

\begin{equation}
    \label{eq:Heegaard-Triple-4-manifold}
    X_{\aalpha, \bbeta, \ggama} : = \Big((\Sigma \times \Delta) \cup (U_{\aalpha} \times e_{\aalpha}) \cup (U_{\bbeta} \times e_{\bbeta}) \cup (U_{\ggama} \times e_{\ggama})\Big)/ \sim 
\end{equation}
where $\Delta$ is a triangle with edges labeled $e_{\aalpha}, e_{\bbeta}$, and $e_{\ggama}$ clockwise, and $\sim$ is the relation determined by gluing $U_{\ttau} \times e_{\ttau}$ to $\Sigma \times \Delta$ along $\Sigma \times e_{\ttau}$ for each $\ttau \in \{\aalpha, \bbeta, \ggama\}$ using the natural identification.

We note that if $H = (\Sigma, \aalpha, \bbeta, \ggama)$ is a general Heegaard triple, with no conditions on the pairwise cut systems, then the four-manifold $X_{\aalpha, \bbeta, \ggama}$ constructed in equation \eqref{eq:Heegaard-Triple-4-manifold} has three boundary components
\begin{equation}
    \label{eq:boundary-Xabg}
    \partial X_{\aalpha, \bbeta, \ggama} = -Y_{\aalpha, \bbeta} \sqcup - Y_{\bbeta, \ggama} \sqcup Y_{\aalpha, \ggama} 
\end{equation}
given by the three Heegaard splittings $(\Sigma, \aalpha, \bbeta)$, $(\Sigma, \bbeta, \ggama)$, and $(\Sigma, \ggama, \aalpha)$. 

However, if $H = (\Sigma, \aalpha, \bbeta, \ggama)$ is required to be a trisection diagram, so that we have
\[
(\Sigma, \aalpha, \bbeta) \cong (\Sigma, \bbeta, \ggama) \cong (\Sigma, \ggama, \aalpha) \cong \#^{k}S^{1} \times S^{2},
\]
then it follows (again from Laudenbach-Poenaru \cite{laudenbach1972note}) that we can fill in these three boundary components and obtain a closed four-manifold.

\subsection{Trisections as singular fibrations}
\label{subsec:trisections-closed-four-manifolds}

The theory of trisections arose from the study of generic smooth maps from four-manifolds to surfaces \cite{gay2012reconstructing, gay2015indefinite, gay2016trisecting}, and while the diagrammatic consequences of the trisection theory are certainly interesting, we'll maintain the historical perspective and view stable maps $\pi: X^{4} \to \disk$ as the primary object of interest. Along the way we explain how the familiar notions of connection, parallel transport, and vanishing cycles can be imported into this setting. To be clear, none of what's presented in this section is original, our main sources being the excellent work \cite{hayano2014modification, behrens2012vanishing, behrens2016elimination, behrens2014smooth}.

Fix $X$ to be a compact, oriented, connected smooth $4$-manifold with corners, and let $\disk$ be the unit disk in the plane. A \emph{stable} map $\pi: X \to \disk$ is one whose critical locus and critical image admit local coordinate descriptions of the following two types\footnote{We recommend \cite{hayano2014modification} for more details on stable maps.}:
\begin{enumerate}
    \item \label{itm:first} \emph{Indefinite fold model}: in local coordinates, $\pi$ is equivalent to:
    \begin{equation}
        \label{eq:indefinite-fold}
        (t,x,y,z) \mapsto (t, x^{2} + y^{2} - z^{2}) 
    \end{equation}
    
    \item \emph{Indefinite cusp model}: in local coordinates, $\pi$ is equivalent to
    \begin{equation}
        \label{eq:indefinite-cusp}
            (t,x,y,z) \mapsto (t, x^{3} + 3tx + y^{2} - z^{2})
            \end{equation}
\end{enumerate}

Since we're interested in four-manifolds with non-empty boundary, we impose the following additional constraints on $\pi$ near the boundary\footnote{See \cite[Remark 4.5]{baykur2017simplifying} for comments on how to extend the homotopy techniques of indefinite fibrations to manifolds with boundary.}:

\begin{enumerate}
\setcounter{enumi}{2}
    \item The boundary of $X$ decomposes into two codimension zero pieces, the \emph{horizontal part} $\partial^{h}X$ and the \emph{vertical part} $\partial^{v}X$, so that $\partial X = \partial^{v}X \cup \partial^{h}X$. The vertical part of the boundary is defined to be $\partial^{v}X = \pi^{-1}(\partial \disk)$, and the horizontal part is defined to be the closure of its complement. 
    \end{enumerate}
\noindent
An important aspect of the behavior of $\pi$ near the horizontal part of the boundary is the following regularity condition.
    \begin{enumerate}
    \setcounter{enumi}{3}
    \item The stable map $\pi$ restricts to smooth fibration on $\partial^{h}X$.
    \end{enumerate}
    \noindent
    The tangent space at any point $p \in X$ splits into
    \begin{equation}
        \label{eq:tangent-space}
        TX_{p} = T_{p}X^{h} \oplus T_{p}X^{v}
    \end{equation}
    where the vertical tangent space is defined to be $T_{p}X^{v} = \ker (D \pi)$, and the horizontal tangent space $T_{p}X^{h}$ is its orthogonal complement with respect to a chosen metric on $X$ (compare \cite{hayano2014modification}). 

\begin{enumerate}
    \setcounter{enumi}{4}
        \item \label{itm:last} If $x$ lies in $\partial^{h}X$, then the horizontal part of the tangent space lies in $T_{x}\partial^{h}X$.
\end{enumerate}

Our main concern in introducing the assumptions \eqref{itm:first} - \eqref{itm:last} is so that we can import the technology of \cite{hayano2014modification, behrens2014smooth, behrens2016elimination} into our setting where $X$ has boundary--namely, so that we can use the tools of $\pi$-compatible connections and parallel transport. For example, the splitting \eqref{eq:tangent-space} defines a \emph{$\pi$-compatible connection}, as in \cite{behrens2016elimination}. Moreover, since $T^{h}X$ is parallel to $\partial^{h}X$, these $\pi$-compatible connections have well-defined parallel transport maps (which are only partially defined on the fibers) in our case when $X$ has non-empty boundary. For more details about connections and parallel transport maps in the context of singular fibrations, we recommend \cite{behrens2016elimination}.

\begin{definition}
\label{def:trisection-map}
Let $X$ be a compact, oriented, connected smooth $4$-manifold with connected boundary $Y = \partial X$. A \emph{$(g,k; p,b)$-trisection map} $\pi: X \to \disk$ is a stable map satisfying the boundary conditions \eqref{itm:first} -- \eqref{itm:last} above and whose critical image is shown in Figure \ref{fig:tris-map} below.
\end{definition}

\begin{figure}[H]
    \centering
    \includegraphics[scale = 0.4]{./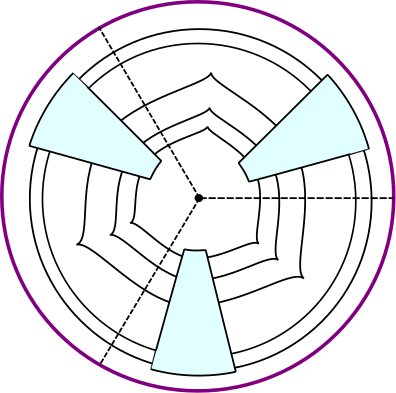}
    \caption{The image of the critical value set in a $(g,k; p,b)$-trisection map. The central fiber $\Sigma := \pi^{-1}(0,0)$ is an oriented genus $g$ surface with $b$ boundary components. The fibers over the purple boundary of the unit disk are the pages of an open book decomposition of the boundary $3$-manifold $Y$--the page has genus $p$. Each of the blue boxes denotes a \emph{Cerf box}, as in \cite[Figure 17]{gay2016trisecting}.}
    \label{fig:tris-map}
\end{figure}

From a $(g, k; p,b)$–trisection map $\pi:X \to \disk$ one can recover the standard decomposition of $X$ into three pieces as described in \cite{castro2018trisections}. Clearly, the three dotted line segments in Figure \ref{fig:tris-map} decompose the image of $\pi$ into three sectors $D_{1}$, $D_{2}$, and $D_{3}$. Define $Z_{i} : = \pi^{-1}(D_{i})$, and note that the local models described in equations \eqref{eq:indefinite-fold}--\eqref{eq:indefinite-cusp} imply that
\[
X = Z_{1} \cup Z_{2} \cup Z_{3}
\]
is naturally a $(g, k; p,b)$–trisection of $X$--see \cite{castro2018diagrams} for more details.

In \cite{gay2016trisecting}, Gay and Kirby show that such a stable map on a four-manifold induces an open book decomposition of its boundary $3$-manifold.

\begin{theorem}[\cite{gay2016trisecting}]
\label{thm:trisection-induces-open-book}
A relative $(g,k;p,b)$-trisection map $\pi: X \to \disk$ induces an open book decomposition on the boundary three-manifold.
\end{theorem}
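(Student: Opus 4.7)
The plan is to read the open book off $Y = \partial X$ directly from the boundary strata of $\pi$, using the crucial observation that the critical image of $\pi$ is contained in the interior of $\disk$, so $\pi$ restricts to an honest smooth fibration over a neighborhood of $\partial \disk$.

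First I would analyze the two boundary strata separately. The vertical boundary $\partial^{v}X = \pi^{-1}(\partial \disk)$ is a surface bundle over $\partial \disk \cong S^{1}$ whose fiber is the genus-$p$ surface $\Sigma_{p,b}$ depicted in Figure \ref{fig:tris-map}. Using a $\pi$-compatible connection (which exists by conditions (3)--(5)) and parallel transport around $\partial \disk$, I would extract a monodromy $\phi : \Sigma_{p,b} \to \Sigma_{p,b}$ and identify $\partial^{v}X$ with the mapping torus of $\phi$. In parallel, condition (4) says that $\pi$ restricts to a smooth fibration on $\partial^{h}X$ with fiber $\partial \Sigma_{g,b} = \sqcup^{b} S^{1}$, and since $\disk$ is contractible this bundle trivializes as $\partial^{h}X \cong \disk \times (\sqcup^{b} S^{1})$, a disjoint union of $b$ solid tori. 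The two strata meet along $\partial \disk \times (\sqcup^{b} S^{1})$.

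Next I would construct the open book. Declare the binding to be the cores of the solid torus components, $L := \{0\} \times (\sqcup^{b} S^{1}) \subset \partial^{h}X$. For each $\theta \in S^{1} = \partial \disk$, I would define a page $P_{\theta} \subset Y$ by taking $\pi^{-1}(\theta) \cong \Sigma_{p,b}$ inside $\partial^{v}X$ and capping off each of its $b$ boundary circles by the radial annulus $\{r \theta : r \in [0,1]\} \times S^{1}$ in the corresponding component of $\partial^{h}X$. Each annulus is attached along a single boundary circle, so $P_{\theta}$ is diffeomorphic to $\Sigma_{p,b}$ with $\partial P_{\theta} = L$. Letting $\theta$ vary sweeps out $Y \setminus L$, and the natural map $Y \setminus L \to S^{1}$ given on $\partial^{v}X$ by the bundle projection and on $\partial^{h}X \setminus L \cong (\disk \setminus \{0\}) \times (\sqcup^{b} S^{1})$ by the angular coordinate in $\disk$ agree on the overlap and fibre $Y \setminus L$ over $S^{1}$. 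A neighborhood of each binding component is modeled on $\disk \times S^{1}$ with the angular fibration radiating out from the core, which is the standard open-book neighborhood, and the monodromy of the resulting open book is $\phi$ extended by the identity across the cap annuli.

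The step I expect to require the most care is the identification of $\partial^{h}X$ as a trivialized $\sqcup^{b} S^{1}$-bundle whose trivialization meshes compatibly with the mapping-torus structure on $\partial^{v}X$ along the codimension-two corner $\partial^{h}X \cap \partial^{v}X$. This is essentially a corner-smoothing argument, and it relies crucially on regularity condition (5) --- that $T^{h}X$ is parallel to $\partial^{h}X$ --- to ensure that horizontal parallel transport preserves the boundary strata, so that the global product coordinates on $\partial^{h}X$ match the boundary framing inherited from parallel transport around $\partial \disk$. Once this matching is in place, the fact that the monodromy of the $S^{1}$-bundle $\partial^{h}X \to \disk$ is trivial (by contractibility of $\disk$) forces $\phi$ to be isotopic to the identity on $\partial \Sigma_{p,b}$, and a standard collar adjustment then produces a representative that is the identity near the boundary, completing the open book data.
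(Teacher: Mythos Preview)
The paper does not actually prove this theorem; it is stated with attribution to Gay--Kirby \cite{gay2016trisecting} and no argument is supplied beyond the remark that the monodromy can later be recovered combinatorially via \cite{castro2018diagrams}. So there is no in-paper proof to compare your proposal against.

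That said, your argument is correct and is essentially the standard one: the vertical boundary $\partial^{v}X = \pi^{-1}(\partial\disk)$ is a $\Sigma_{p,b}$-bundle over $S^{1}$, hence a mapping torus, while the horizontal boundary $\partial^{h}X$ is a (necessarily trivial) $\sqcup^{b}S^{1}$-bundle over the contractible base $\disk$, hence a disjoint union of $b$ solid tori whose cores form the binding. Your care with condition~(5) to ensure the parallel transport respects the corner stratum and that the monodromy is the identity near $\partial\Sigma_{p,b}$ is exactly the right point to flag. One small wording issue: you write the fiber of $\pi|_{\partial^{h}X}$ as $\partial\Sigma_{g,b}$, but since the fiber genus varies across the fold image it would be cleaner to say simply that every fiber of $\pi$ has $b$ boundary circles (the indefinite fold and cusp models of equations~\eqref{eq:indefinite-fold}--\eqref{eq:indefinite-cusp} are surgeries supported away from the fiber boundary), so the restricted fibration has fiber $\sqcup^{b}S^{1}$ over all of $\disk$.
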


Importantly, the monodromy diffeomorphism of the open book decomposition on the boundary $3$-manifold can be recovered combinatorially from the a trisection diagram $(\Sigma, \aalpha, \bbeta, \ggama)$ \cite[Theorem 5]{castro2018diagrams}. This is discussed in more detail in Section \ref{subsec:Heeg-Triples}.

\section{Background on Heegaard Floer homology}
\label{sec:back-HF}

This sections provides a brief review of those aspects of Heegaard Floer homology that will be most important to us: the Heegaard Floer chain complexes, the chain maps induced by four-dimensional cobordisms, and the definition of the `mixed' invariants for closed four-manifolds $X$ with $b_{2}^{+}(X) > 1$. We assume the reader is familiar with the Heegaard Floer canon \cite{ozsvath2004holomorphic, ozsvath2006holomorphic, lipshitz2006cylindrical}.

 \subsection{Heegaard Floer chain complexes}
\label{subsec:HF-chain-complexes}

Fix a closed, connected, oriented three-manifold $Y$, and denote by $\spinc(Y)$ the space of $\spinc$ structures on $Y$. Given a pointed Heegaard splitting $H = (\Sigma, \aalpha, \bbeta, w)$ of $Y$, Ozsv\'ath and Szab\'o \cite{ozsvath2004holomorphic, ozsvathPropsApps} study the Lagrangian Floer cohomology of the two tori
\[
\ta = \alpha_{1} \times \cdots \times \alpha_{g} \quad \tb = \beta_{1} \times \cdots \times \beta_{g}
\]
inside the symmetric product $\symg$. To review their construction, fix $\ss \in \spinc(Y)$ and recall the map $s_{w}: \ta \cap \tb \to \spinc(Y)$. Assuming the Heegaard splitting is \emph{admissable} for the $\spinc$-structure $\ss$ (see \cite{ozsvath2004holomorphic} for more details), and after choosing a (generic) family $J$ of almost complex structures on $\symg$, define the chain complex $CF^{\infty}(\H,\ss)$ to be freely generated over $\F$ by pairs $[x,i]$ where $\xx \in \ta \cap \tb$ satisfies $s_{w}(\xx) = \ss$, $i$ is an integer, $\F$ is the field of two elements, and $\H$ denotes the pair $\H = (H, J)$.

The differential on $CF^{\infty}(\H, \ss)$ is given by
\begin{equation}
    \label{eq:HF-differential}
    \partial [\xx, i] = \sum_{\substack{y \in \ta \cap \tb \\ s_{w}(\yy) = \ss}} \sum_{\substack{\phi \in \pi_{2}(\xx, \yy) \\ \mu(\phi) = 1}} \#\mathcal{M}(\phi) \cdot [\yy, i - n_{w}(\phi)],  
\end{equation}
where $\pi_{2}(\xx, \yy)$ is the space of homotopy classes of Whitney disks connecting $\xx$ to $\yy$, $\mu(\phi)$ is the Maslov index, $\mathcal{M}(\phi)$ is the moduli space of $J$-holomorphic disks in the class $\phi$ (modulo the action of $\R$), and $n_{w}(\phi)$ is the algebraic intersection number of $\phi$ with the divisor $\{w\} \times \mathsf{Sym}^{g-1}(\Sigma)$. The chain groups $CF^{\infty}(\H, \ss)$ come equipped with an $\F[U, U^{-1}]$-action, where $U$ acts by  $U \cdot [\xx, i] = [\xx, i -1]$ and is viewed as a formal variable of degree $-2$. With the infinity complex $(CF^{\infty}(\H, \ss), \partial)$ in hand, one obtains other complexes $CF^{+}, CF^{-}$, and $\widehat{CF}$ by restricting attention to pairs $[x, i]$ with $i \geq 0, i < 0$, and $i = 0$, respectively. The subsequent complexes have an induced $\F[U]$-action, which is trivial in the case of $\widehat{CF}$.

Clearly, the plus, minus, and infinity variations are related by a short exact sequence
\begin{equation}
    \label{eq:ses}
    0 \to CF^{-}(\H, \ss) \to CF^{\infty}(\H, \ss) \to CF^{+}(\H, \ss) \to 0
\end{equation}
which produces a long exact sequence in homology
\begin{equation}
    \label{eq:HF-LES}
    \cdots \to HF^{+}_{\ast+1}(Y, \ss) \xrightarrow{\delta} HF^{-}_{\ast}(Y, \ss) \to HF^{\infty}_{\ast}(Y, \ss) \to \cdots
\end{equation}
where $\delta: HF^{1}_{\ast+1}(Y, \ss) \to HF^{-}_{\ast}(Y, \ss)$ is a connecting homomorphism. Last, the \emph{reduced Heegaard Floer homology groups} $HF^{\pm}_{\reduced}(Y, \ss)$ are defined as
\begin{equation}
    \label{eq:HF-reduced-minus}
    HF^{-}_{\reduced}(Y, \ss) : = \ker \big(\iota_{\ast}: HF^{-}(Y, \ss) \to HF^{\infty}(Y, \ss)\big)
\end{equation}
and
\begin{equation}
    \label{eq:HF-reduced-plus}
    HF^{+}_{\reduced}(Y, \ss) : = \coker \big( \pi_{\ast}: HF^{\infty}(Y, \ss) \to HF^{+}(Y, \ss)\big).
\end{equation}
The connecting homomorphism $\delta$ induces an isomorphism from $HF^{+}_{\reduced}(Y, \ss)$ to $HF^{-}_{\reduced}(Y, \ss)$. Unlike $HF^{\pm}$, the modules $HF^{\pm}_{\reduced}(Y, \ss)$ are always finite-dimensional over $\F$.

 \subsection{Maps associated to cobordisms}
\label{subsec:2-handle-maps}

In addition to defining the $\F[U]$-modules $HF^{\circ}(Y, \ss)$, Ozsv\'ath and Szab\'o show that four-dimensional cobordisms between $3$-manifolds induce $\F[U]$-equivariant maps between the respective Floer homology groups. We briefly highlight the main points of this construction. Our exposition closely follows that of \cite[Section 4]{lipshitz2008combinatorial}.

Consider a smooth, connected, oriented four-dimensional cobordism $W$ from $Y_{-}$ to $Y_{+}$, where $Y_{\pm}$ are also closed, connected, and oriented, and fix a $\spinc$ structure $\ss$ on $W$. Let $f$ be a self-indexing Morse function on $W$, and consider the associated handle decomposition of $W$:
\[
Y_{-} \xrightarrow{W_{1}} Y_{1} \xrightarrow{W_{2}} Y_{2} \xrightarrow{W_{3}} Y_{+}
\]
where the cobordism $W_{i}$ contains only index-$i$ handles.

Given the data $(W = \bigcup_{i}W_{i}, f)$, Ozsv\'ath and Szab\'o \cite{ozsvath2006holomorphic} associate to $(W, \ss)$ an induced map $F^{\circ}_{W, \ss}: HF^{\circ}(Y_{-},\ss_{Y_{-}}) \to HF^{\circ}(Y_{+}, \ss_{Y_{+}})$ between the Floer homologies of $Y_{-}$ and $Y_{+}$ by first defining maps $F^{\circ}_{W_{i}, \ss|_{W_{i}}}$ for each $i = 1,2,3$ and then taking $F^{\circ}_{W, \ss}$ to be their composition. We now review the definitions of these three maps.

\subsubsection*{One- and three-handle maps}
\label{subsubsec:1-and-3-handle-maps}

Suppose that $W_{1}$ is a cobordism from $Y_{-}$ to $Y_{1}$ which consists entirely of $1$-handle additions, and let $\ss$ be a $\spinc$-structure on $W_{1}$. Since $W_{1}$ consists only of $1$-handles, it follows that $Y_{1} \cong Y_{-} \# (S^{1} \times S^{2})^{\# n}$ where $n$ is the number of $1$-handles added by $W_{1}$. By \cite{ozsvath2006holomorphic}, there is a non-canonical identification of the Floer homology of $Y_{1}$ and the module
\begin{equation}
    \label{eq:HF-splits}
  HF^{\circ}(Y_{1}, \ss_{Y_{-}} \# \ss_{0}) \cong HF^{\circ}(Y_{-}, \ss_{Y_{-}}) \otimes H_{\ast}(T^{n}; \Z)
\end{equation}
\noindent
where $H_{\ast}(T^{n};\Z)$ is the usual singular homology of the $n$-torus $T^{n}$ with integer coefficients. Let $\Theta^{+}$ denote the generator (we're working over $\F$) of the top-graded part of $H_{\ast}(T^{n}, \Z)$. The cobordism map $F^{\circ}_{W_{1}, \ss}$ is defined on generators to be
\begin{equation}
    \label{eq:1-handle-map}
F^{\circ}_{W_{1}, \ss}: HF^{\circ}(Y_{0}, \ss_{Y_{0}}) \to HF^{\circ}(Y_{1}, \ss_{Y_{1}}) \qquad [\xx, i] \mapsto [\xx \otimes \Theta^{+}, i]
\end{equation}

It is proved in \cite[Section 4.3]{ozsvath2006holomorphic} that, up to composition with canonical isomorphisms, $F^{\circ}_{W_{1}, \ss}$ does not depend on the choices made in its construction. For brevity, we will usually denote the $1$-handle map by $F_{1}$.

Next, if $W_{3}$ is a cobordism which can be built using only $3$-handles, then for $\ss \in \spinc(W_{3})$ the map $F^{\circ}_{W_{3}, \ss_{3}}$ is defined as the dual of $F^{\circ}_{W_{1}, \ss_{1}}$ by
\[
F^{\circ}_{W_{3}, \ss_{3}}: HF^{\circ}(Y_{2}, \ss_{Y_{2}}) \to HF^{\circ}(Y_{+}, \ss_{Y_{+}}) \qquad [\xx \otimes \Theta^{-}, i] \mapsto [\xx, i]
\]
where $\Theta_{-}$ is the generator of the lowest-graded part of $H_{\ast}(T^{n}, \Z)$. Moreover, 
\[
F^{\circ}_{W_{3}, \ss_{3}}([\xx \otimes \xi, i]) = 0
\]
for any homogeneous generator $\xi$ which does not lie in the bottom degree. Again, the map is independent of the choices made in its construction (e.g. choice of splitting \eqref{eq:HF-splits}).

\subsubsection*{Two-handle maps}
\label{subsubsec:2-handle-maps}

Suppose now that $W$ consists only of $2$-handle additions. In \cite[Definition 4.2]{ozsvath2006holomorphic} Ozsv\'ath and Szab\'o describe such cobordisms with a special type of Heegaard triple diagrams, as we now describe. Since $W$ from $Y_{1}$ to $Y_{2}$ consists only of $2$-handle additions, the cobordism corresponds to surgery on some framed link $\L \subset Y_{1}$. Denote by $\ell$ the number of components of $\L$, and fix a basepoint in $Y_{1}$. Let $B(\L)$ be the union of $\L$ with a path from each component to the basepoint. The boundary of a regular neighborhood of $B(\L)$ is a genus $\ell$ surface, which has a subset identified with $\ell$ punctured tori $F_{i}$, one for each link component. 

\begin{definition}
\label{def:Heeg-triple-sub-to-bouquet}
A Heegaard triple $(\Sigma, \aalpha, \bbeta, \ggama, w)$ is said to be \emph{subordinate to a bouqet} $B(\L)$ for the framed link $\L$ if
\begin{enumerate}[label={(\bfseries B\arabic*)}]
    \item \label{itm:bouq-1}$(\Sigma, \{\alpha_{1}, \ldots, \alpha_{g}\}$, $\{\beta_{1}, \ldots, \beta_{g-\ell}\})$ describes the complement of $B(\L)$.
    
    \item \label{itm:bouq-2}$\{\gamma_{1}, \ldots, \gamma_{g-\ell}\}$, are small isotopic translates of $\{\beta_{1}, \ldots, \beta_{g-\ell}\}$
    
    \item \label{itm:bouq-3}After surgering out the $\{\beta_{1}, \ldots, \beta_{g-\ell}\}$, the induced curves $\beta_{i}$ and $\gamma_{i}$, for $i = g-\ell+1, \ldots, g$, lie on the punctured torus $F_{i}$.
    
    \item \label{itm:bouq-4}For $i = g-\ell+1, \ldots, g$, the curves $\beta_{i}$ represent meridians for the link components, disjoint from all $\gamma_{j}$ for $i \neq j$, and meeting $\gamma_{i}$ in a single transverse point.
    
    \item \label{itm:bouq-5}for $i = g-\ell + 1, \ldots, g$, the homology classes of the $\gamma_{i}$ correspond to the framings of the link components.
\end{enumerate}
\end{definition}

The following lemma shows that one can represent the cobordism $W(\L)$ via a Heegaard triple subordinate to a bouquet for the framed link $\L$. For a proof, see for example \cite[Lemma 9.4]{zemke2015graph} or \cite[Proposition 4.3]{ozsvath2006holomorphic}.

\begin{lemma}
\label{lem:sub-bouq-2-handle-attachment}
Suppose $(\Sigma, \alpha, \bbeta, \ggama, w)$ is a Heegaard triple that is subordinate to a bouquet for a framed link $\L$ in $Y$. After filling in the boundary component $Y_{\bbeta, \ggama}$ with $3$- and $4$-handles, we obtain the handle cobordism $W(Y, \L)$.
\end{lemma}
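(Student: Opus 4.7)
My plan is to proceed in two stages: first identify the three boundary components of $X_{\aalpha, \bbeta, \ggama}$, then identify the capped-off four-manifold with the two-handle cobordism $W(Y, \L)$.

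For the boundaries, conditions \ref{itm:bouq-1} and \ref{itm:bouq-4} together show that $(\Sigma, \aalpha, \bbeta)$ is a Heegaard diagram for $Y$: the first $g-\ell$ of the $\bbeta$ curves cut out a handlebody representing the complement of $B(\L)$, and the remaining $\beta_i$ are meridians whose compressing disks restore tubular neighborhoods of the link components. By \ref{itm:bouq-2}, the curves $\{\gamma_1, \ldots, \gamma_{g-\ell}\}$ cut out the same handlebody, while by \ref{itm:bouq-5} the remaining $\gamma_i$ glue in solid tori along curves representing the prescribed framings of $\L$, so $Y_{\aalpha, \ggama} \cong Y_{\L}$. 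Finally, $(\Sigma, \bbeta, \ggama)$ splits into $g-\ell$ pairs of isotopic curves (contributing $S^1 \times S^2$ summands) by \ref{itm:bouq-2} together with $\ell$ pairs meeting transversely in a single point (contributing $S^3$ summands) by \ref{itm:bouq-3} and \ref{itm:bouq-4}; hence $Y_{\bbeta, \ggama} \cong \#^{g-\ell}(S^1 \times S^2)$, which by Laudenbach-Poenaru admits a unique $3$- and $4$-handle filling up to diffeomorphism.

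The main obstacle I anticipate is identifying the capped-off four-manifold with the handle cobordism $W(Y, \L)$, as opposed to merely producing an abstract four-manifold with the correct boundary. My plan is to equip the triangle $\Delta$ appearing in \eqref{eq:Heegaard-Triple-4-manifold} with a height function that is maximal along $e_\aalpha$ and pull it back via the natural projection to obtain a Morse-Bott function on $X_{\aalpha, \bbeta, \ggama}$. After perturbation, the resulting handle decomposition exhibits the cobordism $Y = Y_{\aalpha, \bbeta} \leadsto Y_{\aalpha, \ggama} = Y_\L$ as a concatenation of $g-\ell$ cancelling $(1,2)$-handle pairs (from the isotopic pairs $(\beta_i, \gamma_i)$, cancelling against the $3$- and $4$-handles used to fill $Y_{\bbeta, \ggama}$) followed by $\ell$ genuine two-handles attached along $\gamma_{g-\ell+1}, \ldots, \gamma_g$ viewed as curves inside $Y$. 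Conditions \ref{itm:bouq-3} and \ref{itm:bouq-4} localize these attaching curves onto the punctured tori $F_i$ where they become identifiable with the components of $\L$, and \ref{itm:bouq-5} matches their framings with those of $\L$. Since a two-handle cobordism is determined up to diffeomorphism by its framed attaching link, this completes the identification. The delicate point will be arranging the $(1,2)$/$(3,4)$-handle cancellations compatibly with the cap-off, which I expect to handle by placing the filling handles in an appropriately chosen region of the Morse function.
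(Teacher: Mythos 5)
The paper does not actually prove Lemma \ref{lem:sub-bouq-2-handle-attachment}; it quotes it and defers to \cite[Lemma 9.4]{zemke2015graph} and \cite[Proposition 4.3]{ozsvath2006holomorphic}, so your argument stands or falls on its own. Your first stage is fine: conditions \ref{itm:bouq-1}--\ref{itm:bouq-5} do give $Y_{\aalpha, \bbeta} \cong Y$, $Y_{\aalpha, \ggama} \cong Y(\L)$ and $Y_{\bbeta, \ggama} \cong \#^{g-\ell}(S^{1}\times S^{2})$, and invoking Laudenbach--Poenaru for well-definedness of the $3$- and $4$-handle filling is exactly the right input. The gap is in your second stage, which is where the content of the lemma lies. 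First, the Morse function you specify does not do what you claim: if the height function on $\Delta$ attains its maximum along $e_{\aalpha}$, then the two vertices $e_{\aalpha}\cap e_{\bbeta}$ and $e_{\aalpha}\cap e_{\ggama}$ --- over which the boundary components $Y_{\aalpha,\bbeta}$ and $Y_{\aalpha,\ggama}$ sit --- both lie in the top level, while $Y_{\bbeta,\ggama}$ sits over the opposite vertex at the minimum. The pullback therefore presents $X_{\aalpha, \bbeta, \ggama}$ as a cobordism from $Y_{\bbeta,\ggama}$ to $Y_{\aalpha,\bbeta}\sqcup Y_{\aalpha,\ggama}$, not (after capping) as a cobordism from $Y_{\aalpha,\bbeta}$ to $Y_{\aalpha,\ggama}$. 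To run your plan you need a function monotone from the $\aalpha\bbeta$-vertex to the $\aalpha\ggama$-vertex, with the filling $\natural^{g-\ell}(S^{1}\times B^{3})$ inserted at the intermediate level over the $\bbeta\ggama$-vertex, and you must then explain how its $3$- and $4$-handles re-index as handles of that cobordism.

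Second, the cancellation bookkeeping is incoherent as stated: a cancelling $(1,2)$-pair cancels internally, so it cannot also ``cancel against the $3$- and $4$-handles used to fill $Y_{\bbeta,\ggama}$''. What must be shown is precisely which handles of the filling absorb the $g-\ell$ handles produced by the parallel pairs $(\beta_{i},\gamma_{i})$; this is the step you yourself defer (``placing the filling handles in an appropriately chosen region''), and it is the whole point of the lemma --- without it you only know the capped-off manifold has the correct boundary, not that it is $W(Y,\L)$. You also need one more sentence identifying the attaching link: $\gamma_{i}$ lies on $F_{i}$ and meets the meridian $\beta_{i}$ in one point, hence is isotopic within the solid-torus neighborhood of the $i$-th component to that component, with framing supplied by \ref{itm:bouq-5}; you gesture at this but do not say it. The cited proofs avoid the delicate re-sorting of handles on $X_{\aalpha,\bbeta,\ggama}$ by running the construction in the opposite direction: they produce the triple decomposition inside $W(Y,\L)$ itself, starting from a Morse function on $Y$ adapted to the bouquet and pushing $\L$ into the Heegaard surface with its surface framing, so that the only remaining ambiguity is the filling, which your Laudenbach--Poenaru step handles. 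Reorganizing your stage two along those lines would close the gap; as written, the identification with $W(Y,\L)$ is asserted rather than proved.
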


We now define the cobordism maps for $2$-handle cobordisms. Suppose $\L \subset Y$ is a framed link in $Y$, and $B(\L)$ is a bouquet. Let $(\Sigma, \aalpha, \bbeta, \ggama,w)$ be a Heegaard triple subordinate to $B(\L)$. Let $\Theta \in \tb \cap \tg$ denote the intersection point in top Maslov grading \cite[Section 2.4]{ozsvath2006holomorphic}.

If $\ss \in \spinc(W(Y, \L))$, the $2$-handle map
\[
F^{-}_{\L, \ss}:CF^{-}(\Sigma, \aalpha, \bbeta, w, \ss|_{Y}) \to CF^{-}(\Sigma, \bbeta, \ggama, w, \ss|_{Y(\L)})
\]
is defined as a count of holomorphic triangles
\begin{equation}
    \label{eq:hol-triangles}
    F^{-}_{\L, \ss}([\xx,i]):= \sum_{\yy \in \ta \cap \tg} \sum_{\substack{\psi \in \pi_{2}(\xx, \Theta_{\beta, \ggama, \yy} \\ \mu(\psi) = 0 \\ s_{w}(\psi) = \ss}} \#\mathcal{M}(\psi) \cdot [i - n_{w}(\psi)],
\end{equation}
where $\pi_{2}(\xx, \Theta, \yy)$ is the set of homotopy classes of Whitney triangles with vertices $\xx, \Theta, \yy$, and $\mathcal{M}(\varphi)$ is the moduli space of holomorphic representatives of $\varphi$. 

Throughout Section \ref{sec:computing-rel-invts}, we will be interested in studying the holomorphic triangle map \eqref{eq:hol-triangles} for diagrams which are not a priori subordinate to a bouquet for a framed link. We address this issue there.

\section{Trisections and Ozsv\'ath-Szab\'o cobordism invariants}
\label{sec:computing-rel-invts}

In this section we prove Theorem \ref{thm:Main} and demonstrate how one can use the data of a relative trisection map $\pi: X^{4} \to \R^{2}$, along with some auxiliary data, to compute the induced cobordism maps in Heegaard Floer homology.

\subsection{Constructing Heegaard triples from relative trisection diagrams}
\label{subsec:Heeg-Triples}

Fix $X^{4}$ to be a compact, oriented, connected, smooth $4$-manifold. The input data we require is a tuple $(\pi,\langle \cdot, \cdot \rangle, \conn)$ consisting of a $(g,k;p,b)$-trisection map $\pi: X \to \R^{2}$, a Riemannian metric $\langle \cdot, \cdot \rangle$ on $X$, and a $\pi$-compatible connection $\conn$. Equipped with such data, we may choose three reference arcs $\eta_{\aalpha}, \eta_{\bbeta}, \eta_{\ggama}: [0,1] \to \disk$ as in Figure \ref{fig:three-Morse-functions} below.
\begin{figure}[ht]
    \centering
    \includegraphics[scale = 0.4]{./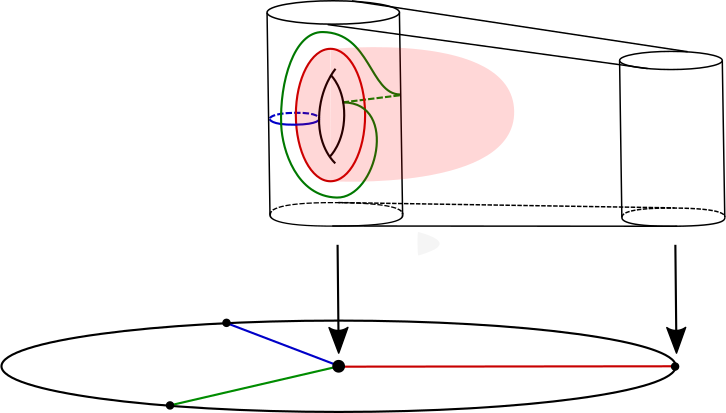}
    \caption{For $\ttau \in \{\aalpha, \bbeta, \ggama\}$, we have reference arcs $\eta_{\ttau}: [0,1] \to \disk$ for which $f_{\ttau}: U_{\ttau} \to [0,3]$ is a Morse function. Drawn in pink is the descending manifold for an index $2$ critical point for $f_{\aalpha}$ whose intersection with $\Sigma$ is an $\aalpha$ curve drawn in red.}
    \label{fig:three-Morse-functions}
\end{figure}
As discussed in Section \ref{subsec:trisections-closed-four-manifolds}, associated to these reference arcs are three Morse functions $f_{\aalpha}, f_{\bbeta}, f_{\ggama}$ defined on the compression bodies $U_{\aalpha}, U_{\bbeta}$, and $U_{\ggama}$ respectively. For $\ttau \in \{\aalpha, \bbeta, \ggama\}$, these Morse functions satisfy:
\begin{itemize}
    \item $f_{\ttau}:U_{\ttau} \to [0, 3]$ is a Morse function with $f_{\ttau}^{-1}(0) = \Sigma$ and $f_{\ttau}^{-1}(3) = \Sigma_{\ttau}$ the surface obtained by doing surgery on $\Sigma$ along the $\ttau$-curves; and,
    \item $f_{\ttau}$ has $g-p$ index two critical points whose descending manifolds intersect $\Sigma$ along the $\ttau$ curves.
\end{itemize}

We define the surface $\Sigma_{\aalpha}$ to be the fiber $f_{\aalpha}^{-1}(3)$ and we fix an identification of $\Sigma_{\aalpha}$ with $\Sigma_{p,b}$. Next, endow $\Sigma_{\aalpha}$ with a model collection of pairwise disjoint arcs $\{a_{1}, \ldots, a_{n}\}$ which constitute a basis for $H_{1}(\Sigma_{\aalpha};\partial \Sigma_{\aalpha})$, as in Figure \ref{fig:standard-arc-basis} below. We call such a collection the \emph{standard arc basis}, and note that $n$ can be computed as $n = 2p + b - 1$. 

\begin{figure}[ht]
    \centering
    \resizebox{0.65\textwidth}{!}{%
    \begin{tikzpicture}

\node[anchor=south west,inner sep=0] at (0,0) {\includegraphics{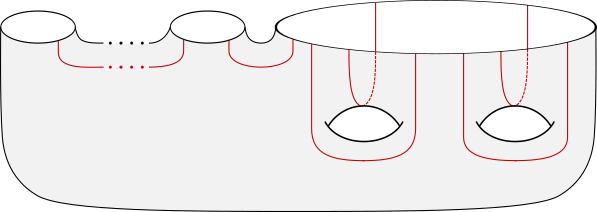}};

%\draw[help lines] (0,0) grid (18,8);

%% This is done by drawing a label above and right
%% of the vertex you're labeling, rather than giving
%% the coordinates of the label itself.
%% Alternately, if you wanted to position the label
%% yourself, you could use a command like:
\node[label=right:{\color{RED}$a_{1}$}] at (14.9,3){};
\node[label=right:{\color{RED} $a_{2}$}] at (13.7,3){};
\node[label=right:{\color{RED} $a_{2p-1}$}] at (10.9,3){};
\node[label=right:{\color{RED} $a_{2p}$}] at (9.7,3){};
\node[label=above:{\color{RED} $a_{2p+1}$}] at (7,3.1){};
\node[label=above:{\color{RED} $a_{2p+b-1}$}] at (2,3.1){};
\node[label = right:{\color{RED} $ \bullet \bullet \bullet$}] at (11,2.2){};
%%
%% \node at (2.2,4.1){$(0,0)$};
%%
\end{tikzpicture}
}
\caption{The standard arc basis of $H_{1}(\Sigma_{\aalpha}, \partial \Sigma_{\aalpha}; \Z)$.}
    \label{fig:standard-arc-basis}
\end{figure}

Next, define $\{b_{1}, \ldots, b_{n}\} \subset \Sigma_{\aalpha}$ and $\{c_{1}, \ldots, c_{n}\} \subset \Sigma_{\aalpha}$ to be two additional arc bases which satisfy the following criteria (see Figure \ref{fig:local-arcs}):
\begin{enumerate}
    \item The arc bases $\{a_{1}, \ldots, a_{n}\}$, $\{b_{1}, \ldots, b_{n}\}$ and $\{c_{1}, \ldots, c_{n}\}$ are isotopic (not relative to the endpoints) by a small isotopy;
\item For each $i = 1, \ldots, n$, $a_{i}$ has a single positive transverse intersection with $b_{i}$, where the orientation of $b_{i}$ is inherited from $a_{i}$.
    \item For each $i = 1, \ldots, n$, $b_{i}$ has a single positive transverse intersection with $c_{i}$, where the orientation of the $c_{i}$ is inherited from the $b_{i}$.
    \item For each $i = 1,\ldots, n$, $a_{i}$ has a single positive intersection with $c_{i}$.
\end{enumerate}

\begin{figure}[ht]
    \centering
    \resizebox{0.45\textwidth}{!}{%
     \begin{tikzpicture}

\node[anchor=south west,inner sep=0] at (0,0) {\includegraphics{./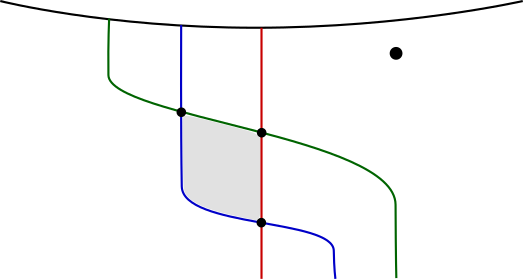}};

%\draw[help lines] (-0.21,0) grid (12,10);

%% This is done by drawing a label above and right
%% of the vertex you're labeling, rather than giving
%% the coordinates of the label itself.
%% Alternately, if you wanted to position the label
%% yourself, you could use a command like:

\node[label=below:{\Large \color{RED} $a_{i}$}] at (7,0){};
\node[label=below:{\Large \color{BLUE} $b_{i}$}] at (9,0){};
\node[label=below:{\Large \color{GREEN} $c_{i}$}] at (10.5,0){};
\node[label=above:{\Large $w$}] at (10.5, 5){};
\node[label=below:{\Large $\partial \Sigma_{\aalpha}$}] at (1, 7.1){};
%%
%% \node at (2.2,4.1){$(0,0)$};
%%
\end{tikzpicture}
}
    \caption{A zoomed in picture near the boundary of $\Sigma_{\aalpha}$.}
    \label{fig:local-arcs}
\end{figure}
    
Our next step is to use the gradient vector field $\nabla f_{\aalpha}$ of the Morse function $f_{\aalpha}$ to flow the arcs $\{a_{1}, \ldots, a_{n}\} \subset \Sigma_{\aalpha}$ onto $\Sigma$. We'll denote the images of $\{a_{1}, \ldots, a_{n}\}$ under this flow by $\aa = \{\aa_{1}, \ldots, \aa_{n}\} \subset \Sigma$. Note that generic choices ensure that the $\aa_{i}$ are pairwise disjoint form each other and from the original $\aalpha$-curves $\{\aalpha_{1}, \ldots, \aalpha_{g-p}\} \subset \Sigma$. Note, however, that the images $\{\aa_{1}, \ldots, \aa_{n}\}$ are only well-defined up to handle-slides over the original $\aalpha$-curves.

With the data of $(\Sigma, \aalpha, \bbeta, \ggama; \aa)$ in hand (along with the additional data $(g, \langle \cdot, \cdot \rangle, \conn)$ that we started with), we're ready to implement the monodromy algorithm \cite[Theorem 5]{castro2018diagrams} of Gay-Castro-Pinz\'on-Caiced\'o to obtain two new collections of arcs $\bb = \{\bb{1}, \ldots, \bb_{n}\}$ and $\cc = \{\cc_{1}, \ldots, \cc_{n}\}$ which, when taken all together with $\aa$, encode the monodromy diffeomorphism $\mu: \Sigma_{\aalpha} \to \Sigma_{\aalpha}$ of the open book on the boundary $3$-manifold $Y = \partial X$ (see \cite{castro2018diagrams} for more details).

To obtain $\bb$, perform a sequence of handle-slides of $\aa$ arcs over $\aalpha$ curves until $\aa \cap \bbeta = \emptyset$; the resulting collection of arcs is $\bb = \{\bb_{1}, \ldots, \bb_{n}\}$. Next, we obtain $\cc$ by performing another sequence of handle-slides of $\bb$ arcs over $\bbeta$ curves until $\bb \cap \ggama = \emptyset$, and denote the resulting collection of arcs by $\cc = \{\cc_{1}, \ldots, \cc_{n}\}$.  By construction, the data $\D = (\Sigma, \aalpha, \bbeta, \ggama; \aa, \bb, \cc)$ constitute an \emph{arced (relative) trisection diagram} of $X$ \cite[Definition 2.12]{gay2018doubly}.

We now describe how to glue together the above data to construct a Heegaard triple, in the sense of Ozsv\'ath-Szab\'o \cite[Section 8.1]{ozsvath2004holomorphic}, which encodes the cobordism $X: \emptyset \to Y$.  Let $\Sigmabar$ be the surface obtained by gluing the boundaries of $\Sigma$ and $-\Sigma_{\aalpha}$ via an orientation reversing diffeomorphism (see Figure \ref{fig:Four-Ball-Glue} below)
\begin{equation}
    \Sigmabar : = \Sigma \cup_{\partial} -\Sigma_{\aalpha}.
\end{equation}
Note that the genus of $\Sigmabar$ is $g(\Sigmabar) = g + p + b -1$.

\begin{figure}[H]
    \centering
    \includegraphics[scale = 0.17]{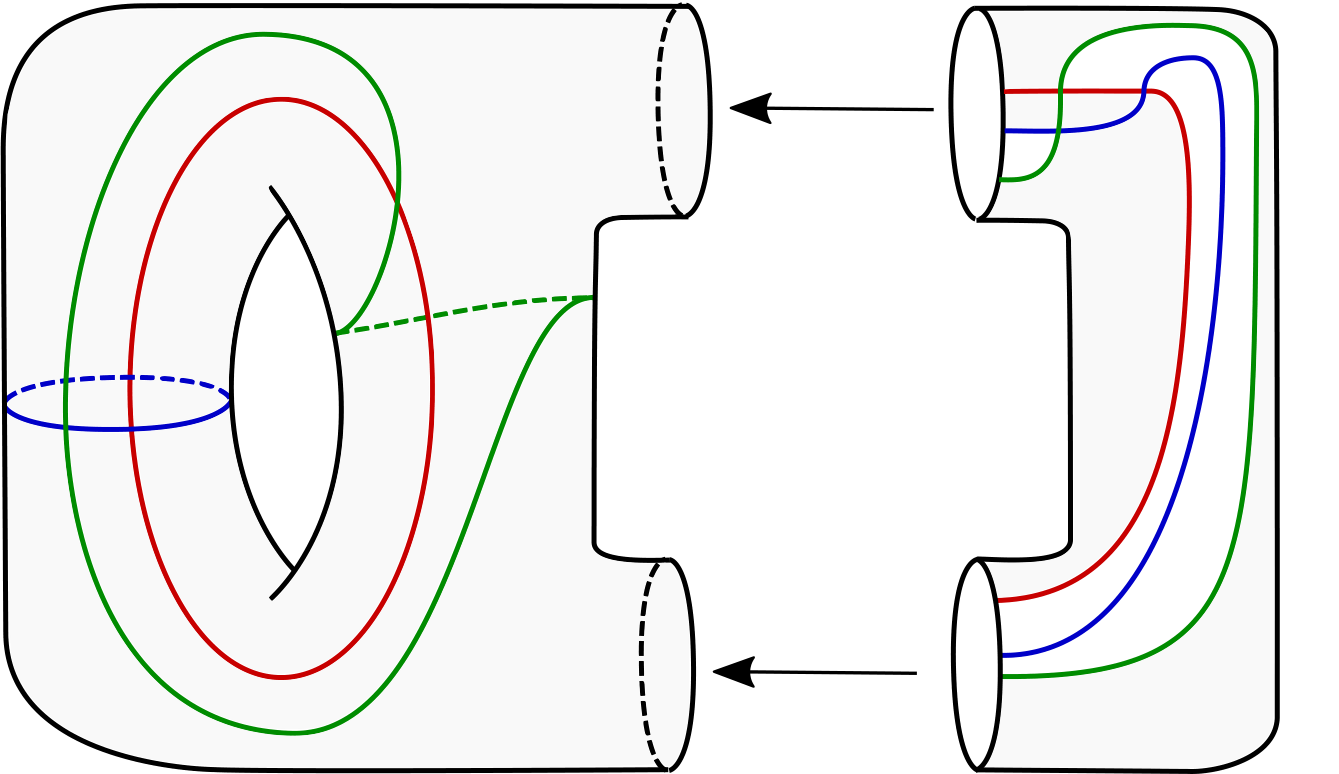}
    \caption{Constructing $\Sigmabar$ by identifying the boundary of the central surface $\Sigma$ (left-hand-side) with that of a page of the open book (right-hand-side) on $Y$ which comes decorated with the parallel arc bases $\{a_{1}\}, \{b_{1}\}$, and $\{c_{1}\}$, drawn in red, blue, and green respectively.}
    \label{fig:Four-Ball-Glue}
\end{figure}

Next, we define three new handlebodies $U_{\aalphabar}, U_{\bbetabar}$, and $U_{\ggamabar}$, each bounded by $\Sigmabar$, by specifying their attaching curves.  The $U_{\aalphabar}$ handlebody is determined by the curves $\{\aalphabar_{1}, \ldots, \aalphabar_{g + p + b -1}\}$ where
\begin{equation}
    \label{eq:alphabar-handlebody}
    \aalphabar_{i} = \begin{cases}
    \alpha_{i} & \quad 1 \leq i \leq |\aalpha| \\
    \aa_{i} \cup_{\partial} \overline{a}_{i} & |\aalpha| + 1 \leq i \leq g(\Sigmabar)
    \end{cases}
\end{equation}

For the $\bbetabar$-handlebody $U_{\bbetabar}$, we define
\begin{equation}
    \label{eq:betabar-handlebody}
    \bbetabar_{i} = \begin{cases}
    \beta_{i} & 1 \leq i \leq |\bbeta| \\
    \bb_{i} \cup_{\partial} \overline{b}_{i} & |\bbeta| + 1 \leq i \leq g(\Sigmabar)
    \end{cases}
\end{equation}

Finally, the $\ggamabar$-handlebody $U_{\ggamabar}$ is determined by
\begin{equation}
    \label{eq:deltabar-handlebody}
    \ggamabar_{i} = \begin{cases}
    \ggama_{i} & 1 \leq i \leq |\ggama| \\
    \cc_{i} \cup_{\partial} \overline{c}_{i} & |\ggama| + 1 \leq i \leq g(\Sigmabar)
    \end{cases}
\end{equation}

\begin{example}
Consider for example the relative trisection diagram for $X = B^{4}$ in the left-hand-side of Figure \ref{fig:Four-Ball-Glue}. After performing the procedure described above, the resulting Heegaard triple looks like Figure \ref{fig:B4-Example} shown below.

\begin{figure}[H]
    \centering
    \includegraphics[scale = 0.17]{./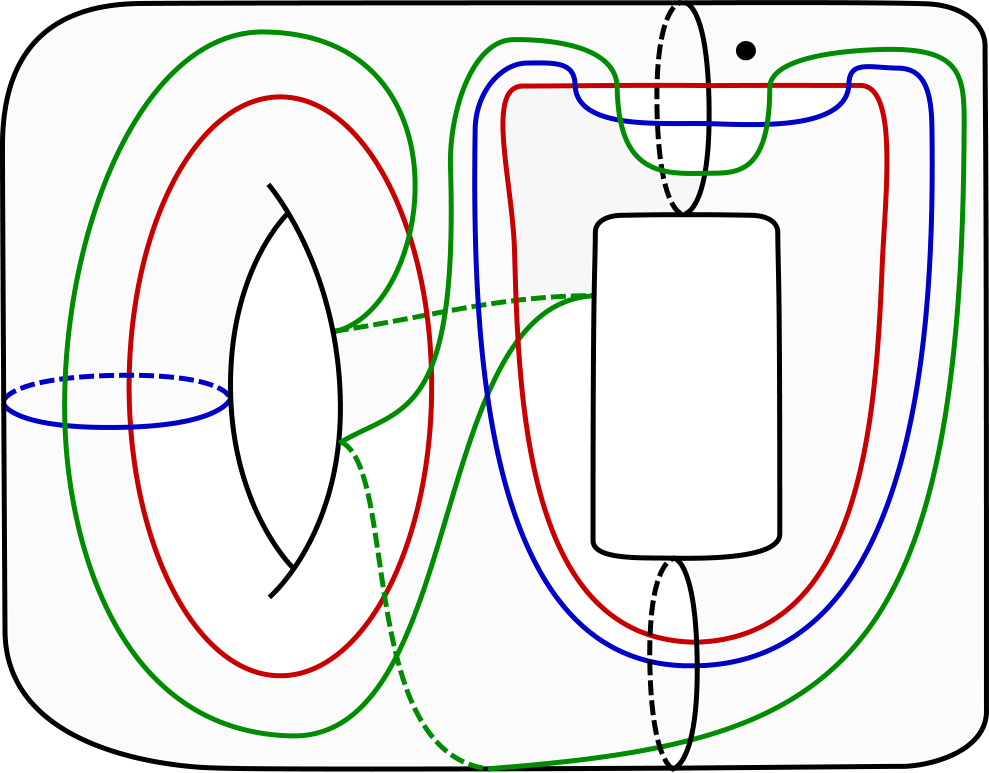}
    \caption{A Heegaard triple produced by applying the procedure described above to the relative trisection diagram for $X = B^{4}$. The closed surface $\Sigmabar$ is obtained by gluing the central surface $\Sigma$ to $-\Sigma_{\aalpha}$ along their boundaries, and the closed curves are obtained by taking a union of the original closed curves from the trisection diagram $(\Sigma, \aalpha, \bbeta, \ggama)$ with those obtained by `doubling' the arc bases using the gradient vector fields of $f_{\aalpha}, f_{\bbeta}$, and $f_{\ggama}$, respectively.}
    \label{fig:B4-Example}
\end{figure}
\end{example}

Thus far, we have described how, given a relative trisection diagram $\D = (\Sigma, \aalpha, \bbeta, \ggama)$ which is compatible with a given $(g,k;p,b)$-trisection map $f:X \to \disk$, to construct a new Heegaard triple $\Dbar = (\Sigmabar, \aalphabar, \bbetabar, \ggamabar)$.  However, it is not at all clear how the original $4$-manifold $X$, as described by the diagram $\D$, and the potentially new $4$-manifold $\Xbar$, as described by the diagram $\Dbar$, are related.  The remainder of this section clarifies this relationship via a technique which we call a \emph{trisector's cut}.

Our strategy for relating $X$ and $\Xbar$ involves a series of intermediate manifolds which we now describe. Starting with $X$, which comes equipped with the decomposition $X = X_{1} \cup X_{2} \cup X_{3}$, consider a collar neighborhood of the boundary of $X_{3}$, denoted $\nu(\partial X_{3})$. 

\begin{figure}[H]
    \centering
    \includegraphics[scale = 0.3]{./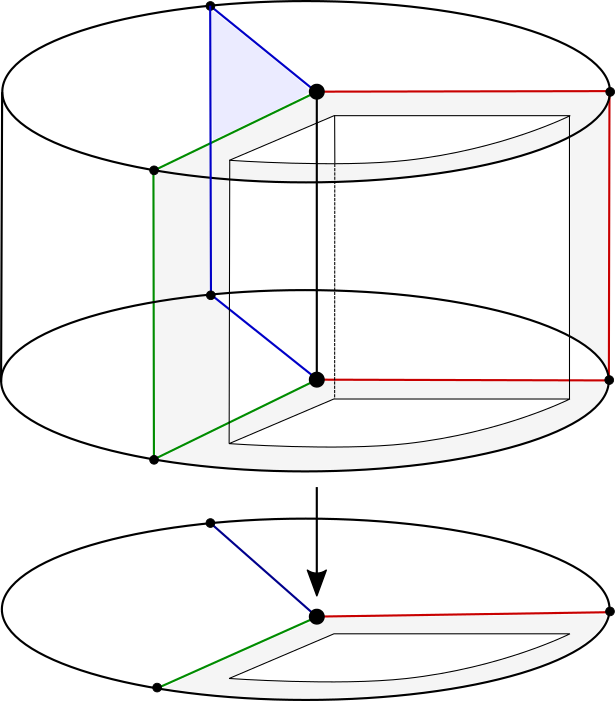}
    \caption{A collar neighborhood of the boundary $X_{3}$.}
    \label{fig:removing-and-drilling}
\end{figure}

After rounding corners we parametrize this collar neighborhood via
\[
\varphi: [0,1] \times \#^{k_{3}}S^{1} \times S^{2} \to \nu(\partial X_{3}),
\]
where $\partial X_{3}$ is embedded in $\nu(\partial X_{3})$ as $\{0\} \times \#^{k_{3}} S^{1} \times S^{2}$. For a chosen basepoint $z \in \pi^{-1}(1) \cong \Sigma_{\aalpha}$, let
\[
\eta: [0,1] \to \nu(\partial X_{3})
\]
be a short arc connecting $z$ to its image in $\{1\} \times \pi^{-1}(1)$. This being done, delete from $X_{3}$ the complement of $\nu(\partial X_{3})$ union a tubular neighborhood of $\eta$. 

\begin{figure}[H]
    \centering
    \includegraphics[scale = 0.3]{./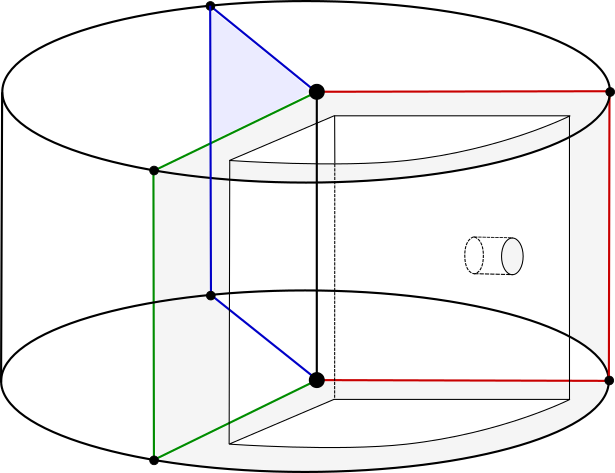}
    \caption{A schematic for deleting the complement of $\nu(\partial X_{3})$ union a tubular neighborhood of $\eta$.}
    \label{fig:Removing-And-Drilling-2}
\end{figure}

In symbols, delete the following subset from $X_{3}$:
\begin{equation}
    \label{eq:delete-from-X3}
    \big(X_{3} \setminus \nu(\partial X_{3})\big) \cup \nu(\eta)
\end{equation}
We give the resulting $4$-manifold a name, $X^{\#}$, and its importance is demonstrated in Proposition \ref{prop:Xsharp} below.

\begin{prop}
\label{prop:Xsharp}
Let $X_{\aalphabar, \bbetabar, \ggamabar}$ be the four-manifold constructed as in equation \eqref{eq:Heegaard-Triple-4-manifold} from the Heegaard triple $(\Sigmabar, \aalphabar, \bbetabar, \ggamabar)$, and define $\Xbar$ to be the smooth four-manifold obtained from $X_{\aalphabar, \bbetabar, \ggamabar}$ after filling in the boundary components $-Y_{\aalphabar, \bbetabar}$ and $-Y_{\bbetabar, \ggamabar}$ with $\natural^{k_{i} + 2p + b - 1} S^{1} \times B^{3}$, $i = 1,2$, respectively. Then the four-manifolds $X^{\#}$ and $\Xbar$ are diffeomorphic.
\end{prop}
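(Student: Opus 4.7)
The plan is to prove the diffeomorphism by showing that $X^{\#}$ and $\Xbar$ admit matching decompositions induced by the trisection structure on $X$. The central observation is that the Heegaard triple $(\Sigmabar, \aalphabar, \bbetabar, \ggamabar)$ stratifies naturally along the splitting $\Sigmabar = \Sigma \cup_\partial \Sigma_\aalpha$, and this stratification mirrors the trisection decomposition $X = X_1 \cup X_2 \cup X_3$.

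First, I would unpack the construction \eqref{eq:Heegaard-Triple-4-manifold} of $X_{\aalphabar, \bbetabar, \ggamabar}$ using the splitting $\Sigmabar = \Sigma \cup_\partial \Sigma_\aalpha$. The piece $\Sigmabar \times \Delta$ splits as $(\Sigma \times \Delta) \cup (\Sigma_\aalpha \times \Delta)$, joined along $\partial\Sigma \times \Delta$. By construction, each attaching collection $\aalphabar, \bbetabar, \ggamabar$ partitions into the original closed curves on $\Sigma$ and the doubled-arc curves crossing the seam (see \eqref{eq:alphabar-handlebody}--\eqref{eq:deltabar-handlebody}), so each three-dimensional handlebody $U_{\aalphabar}$, $U_{\bbetabar}$, $U_{\ggamabar}$ itself decomposes as the union of a trisection compression body (glued along $\Sigma$ via the closed curves) with a second compression body on the page (glued along $\Sigma_\aalpha$ via the doubled arcs).

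Next, I would match the $\Sigma$-side of this stratification with the trisection of $X$: the central stratum $\Sigma \times \Delta$ together with the closed-curve compression bodies $U_\aalpha, U_\bbeta, U_\ggama$ forms a neighborhood of the trisection spine $U_\aalpha \cup U_\bbeta \cup U_\ggama$, and filling in $-Y_{\aalphabar, \bbetabar}$ and $-Y_{\bbetabar, \ggamabar}$ with the specified $4$-dimensional $1$-handlebodies caps this neighborhood to produce two of the full trisection sectors $X_1$ and $X_2$, as in the standard reconstruction of a trisected $4$-manifold from its diagram (cf. \cite{gay2016trisecting, castro2018diagrams}). Conspicuously absent from this assembly is the interior of $X_3$, which accounts exactly for the removal $X_3 \setminus \nu(\partial X_3)$ in the definition of $X^{\#}$. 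Dually, I would identify the $\Sigma_\aalpha$-side of the stratification with the collar $\nu(\partial X_3)$ via the monodromy algorithm of Gay, Castro, and Pinz\'on-Caicedo \cite[Theorem 5]{castro2018diagrams}: the arc bases $\bb, \cc$ are obtained from $\aa$ through handle-slide sequences that encode the open book monodromy $\mu:\Sigma_\aalpha\to\Sigma_\aalpha$ on $\partial X$, so traversing the three doubled-arc compression bodies cyclically realizes the mapping torus of $\mu$.

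Combining these identifications, the two filled-in boundary components of $X_{\aalphabar, \bbetabar, \ggamabar}$ correspond to the ``missing'' sectors $X_1$ and $X_2$, while the remaining boundary $Y_{\aalphabar, \ggamabar}$ is identified with $\partial X^{\#}$, namely the connect sum $\partial X \# \partial X_3$ produced by drilling $\nu(\eta)$. The main obstacle lies in this last step: rigorously translating the parallel-transport data arising from the Morse functions $f_\aalpha, f_\bbeta, f_\ggama$ along the reference arcs into the combinatorial handle-slide sequence, and verifying that each slide corresponds precisely to a $2$-handle attachment inside the collar with the correct framing and attaching region. Once this Morse-theoretic correspondence is established, the drilled neighborhood $\nu(\eta) \subset X^{\#}$ is automatically matched with the way the three handlebodies are glued together along the seam $\partial \Sigma \times \Delta$ in the Heegaard triple construction, yielding the desired diffeomorphism.
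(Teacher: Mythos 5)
Your outline gathers the right ingredients (the splitting $\Sigmabar = \Sigma \cup_{\partial} \Sigma_{\aalpha}$, the trisection decomposition of $X$, the monodromy algorithm), but it has a genuine gap at exactly the two places where the content of the proposition lies. First, the claim that capping $-Y_{\aalphabar,\bbetabar}$ and $-Y_{\bbetabar,\ggamabar}$ ``produces two of the full trisection sectors $X_{1}$ and $X_{2}$, as in the standard reconstruction'' is not accurate as stated: $(\Sigmabar,\aalphabar,\bbetabar,\ggamabar)$ is not a (relative) trisection diagram for $X$, and the capping pieces are $\natural^{\ell_{i}}S^{1}\times B^{3}$ with $\ell_{i}=k_{i}+2p+b-1$, which exceeds $k_{i}$ whenever $(p,b)\neq(0,1)$, so they cannot literally be the sectors $X_{i}\cong \natural^{k_{i}}S^{1}\times B^{3}$; each cap must also absorb material coming from the thickened page ($\Sigma_{\aalpha}$-side) of $\Sigmabar$, and deciding exactly which chunk of $X^{\#}$ a cap corresponds to is part of what must be proved, not an input. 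Second, and more seriously, the step you yourself defer as ``the main obstacle''---showing that the combinatorially defined $\bb$- and $\cc$-arcs (hence $\bbetabar,\ggamabar$) agree with what the geometry of $\pi$ produces, and controlling how the complementary pieces are glued back in---is precisely the heart of the proposition; a proof that leaves this open has not established the diffeomorphism.

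The paper's argument resolves the first point and sidesteps the second. It embeds only the spine $X_{\aalphabar,\bbetabar,\ggamabar}$ into $X^{\#}$: the handlebodies $U_{\aalphabar},U_{\bbetabar},U_{\ggamabar}$ are taken to be the fibers of $\pi$ over three reference arcs in a $\pi$-compatible (keyhole) parametrization of the base, corners are rounded using \cite[Lemma 8.4]{juhasz2018contact}, and the $\bbetabar$- and $\ggamabar$-attaching circles are isotoped onto the single central surface $\Sigmabar$ by parallel transport along the chosen $\pi$-compatible connection; the monodromy algorithm of \cite[Theorem 5]{castro2018diagrams} is what certifies that the transported curves coincide with the handle-slide description of $\bb$ and $\cc$. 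Once the spine is embedded, the complementary regions are capped along boundary components diffeomorphic to $\#^{\ell_{i}}S^{1}\times S^{2}$, and the uniqueness theorem of Laudenbach--Poenaru \cite{laudenbach1972note} makes the capping independent of the gluing, so one never has to match gluing maps, framings, or identify caps with particular sectors. If you wish to keep your piece-by-piece matching strategy, you must either supply the deferred verification (the Morse-theoretic/parallel-transport identification of the arcs together with the compatibility of the gluings along $\partial\Sigma\times\Delta$ and with $\nu(\eta)$), or, much more efficiently, reorganize the argument around a spine embedding and a Laudenbach--Poenaru step as the paper does.
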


\begin{remark}
The author would like to warmly thank David Gay and Juanita Pinz\'on-Caicedo for helpful suggestions during the development of this proof. 
\end{remark}

\begin{proof}
The essential point of the argument is showing how to embed the spine $X_{\aalphabar, \bbetabar, \ggamabar}$ of $\Xbar$ into $X^{\#}$. To do so, we need to identify the surface $\Sigmabar$ and the handlebodies it bounds $U_{\aalphabar}, U_{\bbetabar}$, and $U_{\ggamabar}$ as the appropriate submanifolds of $X^{\#}$. The result will then quickly follow from the uniqueness theorem of Laudenbach-Poenaru \cite{laudenbach1972note}.

To begin, notice that after making the modifications to $X_{3} \subset X$ as in Figure \ref{fig:removing-and-drilling}, the base diagram is now reminiscent of the familiar \emph{keyhole} contour which we parametrize as $B = [-\pi/6, \pi/6] \times [0,1]$ where $\theta \in [-\pi/6, \pi/6]$ and $t \in [0,1]$ are coordinates.

Following Behrens \cite[Section 3.2]{behrens2014smooth}, we say that a parametrization $\kappa: B \to [-\pi/6,\pi/6] \times [0,1]$ is \emph{compatible with $\pi$} if the critical image $C_{\kappa}: = \kappa \circ \pi(\crit(\pi))$ is in the following standard position:
\begin{itemize}
    \item All cusps point to the right (i.e. in the positive $t$-direction).
    \item Each $R_{\theta}:= \{\theta \} \times [0,1]$ meets $C_{\kappa}$ in exactly $g - p$ points, and each intersection is either at a cusp or meets transversely in a fold point.
    \item For a fixed small $\varepsilon > 0$, there exists a $2\varepsilon$-neighborhood $N_{2\varepsilon}$ of $\partial^{\theta} B := [-\pi/6, \pi/6] \times \{0,1\}$ such that $\kappa \circ \pi(\crit(\pi)) \cap N_{2\varepsilon} = \emptyset$.
\end{itemize}

\begin{figure}[ht]
    \centering
    \includegraphics[scale = 0.5]{./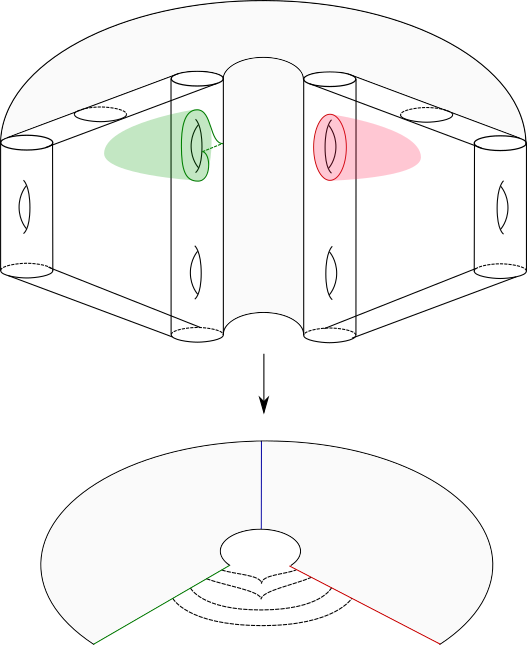}
    \caption{An impressionistic picture of the trisector's cut. The green arc in the base represents $\eta_{\ggama}$, and above it lies the relative compression body $U_{\ggama}$ viewed as a relative cobordism from the central surface $\Sigma$ to $\Sigma_{\ggama}$. Analogous statements can be made for the red arc in the base which represents $\eta_{\aalpha}$.}
    \label{fig:Trisectors-Cut}
\end{figure}

Fix a $\pi$-compatible parametrization $\kappa: B \to [-\pi/6, \pi/6] \times [0,1]$ of the base, and consider the reference arcs $\eta_{\aalpha}:= \{-\pi/3\} \times [0,1]$, $\eta_{\bbeta} := \{\pi\} \times [0,1]$, and $\eta_{\ggama} : = \{\pi/3\} \times [0,1]$. Observe that 
\begin{align*}
    & U_{\aalphabar} : = \pi^{-1}(\eta_{\aalpha}) \\ 
    & U_{\bbetabar} : = \pi^{-1}(\eta_{\bbeta}) \\ 
    & U_{\ggamabar} : = \pi^{-1}(\eta_{\ggama})
\end{align*}
are each relative compression bodies. It's well-known that one can round the corners of these compression bodies and obtain honest $3$-dimensional handlebodies. To be explicit, we refer to Lemma 8.4 of \cite{juhasz2018contact} where the reader can also find a proof.

\begin{lemma}[Lemma 8.4 of \cite{juhasz2018contact}]
\label{lem:rounding-corners}
Let $U_{\aalpha}$ be the relative compression body formed by attaching $3$-dimensional $2$-handles to $I \times \Sigma$ along the curves $\{0\} \times \aalpha$.  After rounding corners, we can view $U_{\aalpha}$ as a handlebody (in the usual sense) of genus $|\aalpha| - \chi(\Sigma_{\aalpha}) + 1$ and boundary
\[
\big(\{1\} \times \Sigma\big) \cup_{\partial} \overline{\Sigma}_{\aalpha}.
\]
Furthermore, a set of compressing disks for $U_{\aalpha}$ can be obtained by taking $|\aalpha|$ compressing disks $D_{\aalpha}$ with boundary $\{1\} \times \aalpha$ for $\alpha \in \aalpha$, as well as disks of the form $D_{c^{\ast}_{i}}: = I \times c_{i}^{\ast}$ for pairwise disjoint, embedded arcs $c^{\ast}_{1}, \ldots, c^{\ast}_{b_{1}\Sigma_{\aalpha}}$ in $\Sigma$ that avoid the $\aalpha$ curves, and form a basis of $H_{1}(\Sigma_{\aalpha}, \partial \Sigma_{\aalpha})$.  These cut $U_{\aalpha}$ into a single $3$-ball. 
\end{lemma}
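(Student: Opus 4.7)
The plan is to recognize $U_{\aalpha}$ as a handlebody by flipping the cobordism upside-down and producing an explicit handle decomposition with one $0$-handle and $g + p + b - 1$ one-handles, whose co-cores are precisely the disks named in the statement. First I would verify the boundary genus: after rounding corners along $\partial \Sigma \times \{0,1\}$, the two horizontal pieces $\{1\} \times \Sigma \cong \Sigma_{g,b}$ and $\overline{\Sigma}_{\aalpha} \cong \Sigma_{p,b}$ glue along $\partial \Sigma$ to form a closed surface of Euler characteristic $(2 - 2g - b) + (2 - 2p - b) = 4 - 2(g + p + b)$, hence of genus $g + p + b - 1$. The identity $g + p + b - 1 = (g - p) + (2p + b - 1) = |\aalpha| - \chi(\Sigma_{\aalpha}) + 1$ then matches the stated genus.

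Next, I would exhibit the handle decomposition. Flipping $U_{\aalpha}$ upside down presents it as $\Sigma_{\aalpha} \times I$ together with $|\aalpha|$ three-dimensional $1$-handles attached to $\Sigma_{\aalpha} \times \{1\}$, one for each $\alpha \in \aalpha$, with feet in the pair of scar disks on $\Sigma_{\aalpha}$ created by surgering $\Sigma$ along $\alpha$. After corner rounding, $\Sigma_{\aalpha} \times I$ is itself a handlebody of genus $b_{1}(\Sigma_{\aalpha}) = 2p + b - 1$: any collection $c_{1}^{\ast}, \ldots, c_{2p + b - 1}^{\ast}$ of disjoint properly embedded arcs in $\Sigma_{\aalpha}$ forming a basis of $H_{1}(\Sigma_{\aalpha}, \partial \Sigma_{\aalpha})$ (chosen disjoint from the scar disks) cuts $\Sigma_{\aalpha}$ into a single polygon, so the disks $I \times c_{i}^{\ast}$ cut $\Sigma_{\aalpha} \times I$ into a $3$-ball. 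Concatenating the two halves of the construction, $U_{\aalpha}$ is built from one $0$-handle and $(2p + b - 1) + |\aalpha| = g + p + b - 1$ one-handles, as required.

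Finally, to match these co-cores with the disks in the statement: the co-core of the $1$-handle dual to $\alpha$ is precisely the disk $D_{\alpha}$ obtained by gluing the core of the original $2$-handle (bounded by $\{0\} \times \alpha$) to the vertical annulus $\alpha \times I \subset \Sigma \times I$, a properly embedded disk with boundary $\{1\} \times \alpha$; the co-cores of the remaining $1$-handles are the disks $I \times c_{i}^{\ast}$, and the arcs $c_{i}^{\ast}$ can be isotoped into $\Sigma$ disjoint from $\aalpha$ using the deformation retract of $\Sigma \setminus \aalpha$ onto $\Sigma_{\aalpha}$ minus the scar disks. Since these are the co-cores of a complete $1$-handle system, cutting $U_{\aalpha}$ along them leaves only the single $0$-handle, i.e.\ a $3$-ball. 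The chief technical subtlety I anticipate is the corner-rounding bookkeeping: one must verify that each $D_{\alpha}$ and each $I \times c_{i}^{\ast}$ is genuinely properly embedded in the rounded $U_{\aalpha}$ with boundary on the glued-up boundary surface, and that the ``flipped'' $1$-handle picture faithfully represents the original $2$-handle attachment. Once these identifications are made, the remainder is the standard observation that cutting a handlebody along the co-cores of a full $1$-handle system leaves a single $0$-handle.
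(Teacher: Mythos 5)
Your argument is correct. Note that the paper does not prove this lemma itself: it is quoted from Juh\'asz (Lemma 8.4 of the cited reference) and the reader is sent there for the proof, so there is no in-paper argument to compare against. Your route---turning the compression body upside down to see it as $\Sigma_{\aalpha} \times I$ with $|\aalpha|$ one-handles, checking the genus by an Euler characteristic count, and identifying the disks $D_{\alpha}$ and $I \times c_{i}^{\ast}$ as the co-cores of a complete one-handle system in a decomposition with a single $0$-handle---is the standard proof and is essentially the argument given in the cited source; the one fact you use without comment, that disjoint arcs representing a basis of $H_{1}(\Sigma_{\aalpha}, \partial \Sigma_{\aalpha})$ cut the surface into a single polygon, is standard and easily justified by excision plus an Euler characteristic count.
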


Applying Lemma \ref{lem:rounding-corners} to the three sutured compression bodies $U_{\aalphabar}, U_{\bbetabar}$, and $U_{\ggamabar}$ above, we obtain three $3$-dimensional handlebodies with $\partial U_{\ttau} = \Sigmabar_{\ttau}$ for each $\ttau \in \{\aalphabar, \bbetabar, \ggamabar\}$. We take as the central surface in our spine-decomposition of $X^{\#}$ to be $\Sigmabar:= \partial U_{\aalphabar}$. Clearly, $\Sigmabar$ bounds the $U_{\aalphabar}$ handle-body described in equation \eqref{eq:alphabar-handlebody}. Notice, however, that the $U_{\bbetabar}$ and $U_{\ggamabar}$ handlebodies are completely disjoint from $\Sigmabar$. To remedy this, we isotope the attaching circles for the $\bbetabar$- and $\ggamabar$-handlebodies onto $\Sigmabar$, and it is via this isotopy that we see how the monodromy of the open book decomposition of $Y$ naturally arises. After isotoping the attaching curves onto the same central surface $\Sigmabar$, we will have completed the proof that the spine $X_{\aalphabar, \bbetabar, \ggamabar}$ of $\Xbar$ embeds into $X^{\#}$.

Now, we'll construct an isotopy for the attaching circles for the handlebodies $U_{\bbetabar}$ and $U_{\ggamabar}$. To do so, recall from the beginning of this section that we have a chosen $\pi$-compatible connection $\conn$. The first step is to thicken the surface $\Sigmabar_{\bbeta} : = \partial U_{\bbetabar}$ to $\Sigmabar_{\bbeta} \times [0, 2\varepsilon]$ using the inward pointing normal direction coming from the boundary.  Since we have an $\pi$-compatible parametrization of the base, the attaching circles on $\Sigmabar_{\bbeta} \times \{2\varepsilon\}$ are isotopic to those of $\Sigmabar_{\bbetabar} = \Sigmabar_{\bbetabar} \times \{0\}$. Next, we use the $\pi$-compatible connection $\conn$ to transport the attaching circles on $\Sigmabar_{\bbetabar} \times \{2\varepsilon\}$ onto to $\Sigmabar_{\aalpha} \times \{2\varepsilon\}$.

\begin{figure}[ht]
    \centering
    \includegraphics[scale = 0.4]{./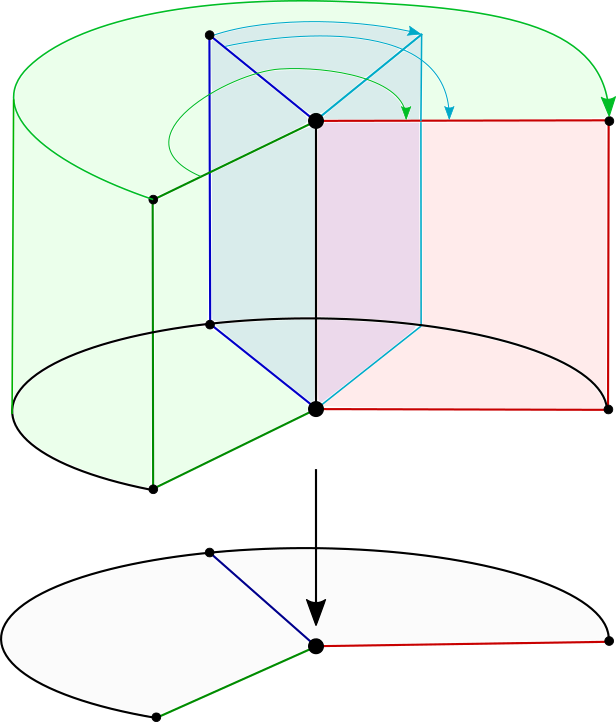}
    \caption{A schematic for visualizing how the attaching curves for $U_{\bbetabar}$ and $U_{\ggamabar}$ are isotoped onto $\Sigmabar$ using parallel transport. On the right, there is a filled-in red square which is meant to represent the relative compression body $U_{\aalphabar}$ and its boundary is $\Sigmabar$--similarly for the blue and green squares. Once the blue square is `pushed in', one can flow the attaching curves clockwise onto a copy of $\Sigmabar$ which is also slightly pushed in. Finally, one employs the same strategy to the $\ggamabar$ curves. The general impression should be that of a nautilus shell.}
    \label{fig:full-flow}
\end{figure}

Since the $\bbetabar$ attaching circles have been isotoped onto $\Sigmabar$ using a $\pi$-compatible connection, it follows by the monodromy algorithm of \cite[Theorem 5]{castro2018diagrams} that the resulting curves are precisely those for $U_{\bbetabar}$--see Figure \ref{fig:full-flow} below. Next, we repeat the above process using the $\ggamabar$ attaching circles and arrive at the same conclusion for $U_{\ggamabar}$. Thus, we've shown that the spine $X_{\aalphabar, \bbetabar, \ggamabar}$ of $\Xbar$ embeds into $X^{\#}$, and the proposition follows after applying the uniqueness theorem of \cite{laudenbach1972note} to the remaining boundary components.
\end{proof}

\begin{remark}
\label{cor:Xsharp-X}
The boundary of $X^{\#}$ is $Y \# (\#^{k_{3}} S^{1} \times S^{2})$, and after filling in the $\#^{k_{3}}S^{1} \times S^{2}$, we recover the original $4$-manifold $X$. 
\end{remark}

\begin{cor}
\label{cor:alpha-beta-gamma-curves}
In the Heegaard triple $(\Sigmabar, \aalphabar, \bbetabar, \ggamabar, w)$ constructed above, we have that $(\Sigmabar, \aalphabar, \bbetabar)$, $(\Sigmabar, \bbetabar, \ggamabar)$ and $(\Sigmabar, \aalphabar, \ggamabar)$ are Heegaard diagrams for the three-manifolds $\#^{\ell_{1}} S^{1} \times S^{2}$, $\#^{\ell_{2}}S^{1} \times S^{2}$, and $Y \# (\#^{k_{3}}S^{1} \times S^{2})$ where $\ell_{i} = k_{i} + 2p + b - 1$.
\end{cor}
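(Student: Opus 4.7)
The plan is to read off each of the three boundary 3-manifolds directly from Proposition \ref{prop:Xsharp}, combined with the decomposition \eqref{eq:boundary-Xabg} of $\partial X_{\aalphabar, \bbetabar, \ggamabar}$. By construction, the Heegaard splittings $(\Sigmabar, \aalphabar, \bbetabar)$, $(\Sigmabar, \bbetabar, \ggamabar)$, and $(\Sigmabar, \aalphabar, \ggamabar)$ tautologically describe the three boundary components $Y_{\aalphabar, \bbetabar}$, $Y_{\bbetabar, \ggamabar}$, and $Y_{\aalphabar, \ggamabar}$ respectively, so all that remains is to identify each of these 3-manifolds explicitly.

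For $Y_{\aalphabar, \bbetabar}$ and $Y_{\bbetabar, \ggamabar}$, the argument I have in mind is essentially tautological once Proposition \ref{prop:Xsharp} is in hand. That proposition asserts that capping off $X_{\aalphabar, \bbetabar, \ggamabar}$ along $-Y_{\aalphabar, \bbetabar}$ and $-Y_{\bbetabar, \ggamabar}$ with the 4-dimensional 1-handlebodies $\natural^{\ell_i} S^{1} \times B^{3}$ (where $\ell_i = k_i + 2p + b - 1$, $i = 1, 2$) produces $\Xbar \cong X^{\#}$. Such a gluing can only take place if the boundaries match, forcing $Y_{\aalphabar, \bbetabar} \cong \#^{\ell_1} S^{1} \times S^{2}$ and $Y_{\bbetabar, \ggamabar} \cong \#^{\ell_2} S^{1} \times S^{2}$. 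The real geometric content has already been exerted in the proof of the proposition, where these two 3-manifolds are seen to arise as the boundaries of two of the three 4-dimensional sectors of $X^{\#}$ cut out by the embedded spine $U_{\aalphabar} \cup U_{\bbetabar} \cup U_{\ggamabar}$; each such sector, after rounding corners, is a regular neighborhood of the trisection sector $Z_i$ enlarged by the doubling construction on a page, and is therefore a 1-handlebody of the claimed genus.

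For $Y_{\aalphabar, \ggamabar}$, I would combine the diffeomorphism $\Xbar \cong X^{\#}$ with Remark \ref{cor:Xsharp-X}. After the first two boundary components of $X_{\aalphabar, \bbetabar, \ggamabar}$ are filled in to obtain $\Xbar$, the only remaining boundary is $Y_{\aalphabar, \ggamabar}$. Since $\partial \Xbar \cong \partial X^{\#} \cong Y \# (\#^{k_3} S^{1} \times S^{2})$ by the remark, we conclude $Y_{\aalphabar, \ggamabar} \cong Y \# (\#^{k_3} S^{1} \times S^{2})$.

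In short, the corollary is a direct accounting consequence of Proposition \ref{prop:Xsharp} and the decomposition \eqref{eq:boundary-Xabg}; no new geometric input is required. There is no step that stands out as a genuine obstacle---the only care needed is in bookkeeping the indexing of the integers $\ell_i$ against the correct trisection sectors, which is automatic from how the spine $U_{\aalphabar} \cup U_{\bbetabar} \cup U_{\ggamabar}$ was placed inside $X^{\#}$.
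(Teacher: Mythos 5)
Your main route has a circularity problem. You propose to read the identifications $Y_{\aalphabar, \bbetabar} \cong \#^{\ell_{1}}S^{1}\times S^{2}$ and $Y_{\bbetabar, \ggamabar} \cong \#^{\ell_{2}}S^{1}\times S^{2}$ off the \emph{statement} of Proposition \ref{prop:Xsharp}, on the grounds that the gluing used to define $\Xbar$ ``can only take place if the boundaries match.'' But the definition of $\Xbar$ in that proposition -- filling $-Y_{\aalphabar,\bbetabar}$ and $-Y_{\bbetabar,\ggamabar}$ with $\natural^{\ell_{i}}S^{1}\times B^{3}$ -- only makes sense if one already knows these boundary components have the stated form; the identification is a presupposition of the proposition, not a consequence of it (indeed, it is implicitly needed when Laudenbach--Poenaru is invoked in its proof). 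That is why the paper does not deduce the corollary from Proposition \ref{prop:Xsharp} at all: it argues directly that the pairs coming from the relative trisection diagram $(\Sigma,\aalpha,\bbeta,\ggama)$ already give connected sums of $S^{1}\times S^{2}$ (the sector boundaries, accounting for the $k_{i}$), and that the open book monodromy can be trivialized over a single sector, which is what makes the $2p+b-1$ doubled arc curves contribute standard extra $S^{1}\times S^{2}$ summands.

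Your fallback sketch -- that the pieces of $X^{\#}$ cut out by the embedded spine are ``regular neighborhoods of the trisection sectors $Z_{i}$ enlarged by the doubling construction, and therefore 1-handlebodies of the claimed genus'' -- gestures at exactly this content, but the word ``therefore'' is doing all the work: justifying that the enlarged sector is $\natural^{k_{i}+2p+b-1}S^{1}\times B^{3}$ is precisely where the trivialization of the monodromy over one sector and the count $2p+b-1 = b_{1}(\Sigma_{p,b})$ must enter, and it is the entire content of the corollary for the first two diagrams. So for those two statements you have either a circular citation or an unproven assertion, which is a genuine gap. Your argument for the third diagram, identifying $Y_{\aalphabar,\ggamabar}$ with $\partial \Xbar \cong \partial X^{\#} \cong Y \# (\#^{k_{3}}S^{1}\times S^{2})$ via Proposition \ref{prop:Xsharp} and Remark \ref{cor:Xsharp-X}, is legitimate and matches the reasoning left implicit in the paper (whose proof only addresses the first two diagrams explicitly).
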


\begin{proof}
The statements for $(\Sigmabar, \aalphabar, \bbetabar)$ and $(\Sigmabar, \bbetabar, \ggamabar)$ follow from a combination of two facts; the first being that $(\Sigma, \aalpha, \bbeta, \ggama)$ is a relative trisection, so that to begin with the pairwise tuples yield connect sums of $S^{1} \times S^{2}$; and the second being that the monodromy of the open book can be trivialized over one sector at a time. 
\end{proof}

\subsection{Holomorphic triangles and cobordism maps}
\label{subsec:hol-triangle-cob-map}

Fix $X$ to be a smooth, oriented, compact four-manifold with connected boundary, and equip $X$ with a $(g,k;p.b)$-trisection map $\pi: X \to \disk$, a metric $\langle \cdot, \cdot \rangle$, and a $\pi$-compatible connection $\conn$ as in Section \ref{subsec:Heeg-Triples} above. If $(\Sigma, \aalpha, \bbeta, \ggama)$ is a (relative) trisection diagram associated to these data, we show how the holomorphic triangle map \eqref{eq:hol-triangles} applied to the pointed Heegaard triple $\underline{\H} = (\Sigmabar, \aalphabar,\bbetabar, \ggamabar, w)$ computes the induced cobordism map of Ozsv\'ath and Szab\'o.  

\begin{prop}
\label{lem:strongly-equivalent}
Let $\underline{\H} = (\Sigmabar, \aalphabar, \bbetabar, \ggamabar, w)$ be a pointed Heegaard triple constructed using the prescription described in subsection \ref{subsec:Heeg-Triples} above, and let $X_{\aalphabar, \bbetabar, \ggamabar}$ be its associated four-manifold spine which we view as a cobordism from $\#^{\ell_{1}}S^{1} \times S^{2}$ to $Y^{\#}$ after filling in $-Y_{\bbetabar, \ggamabar}$ with $\natural^{\ell_{2}}S^{1} \times B^{3}$ and a $4$-handle. Then $\underline{\H}$ is slide-equivalent to another Heegaard triple $\H'$ which is subordinate to a bouquet for a framed link $\L \subset \#^{\ell_{1}}S^{1} \times S^{2}$ for which the $2$-handle cobordism $W(\#^{\ell_{1}}S^{1} \times S^{2}, \L)$ is diffeomorphic to $X_{\aalphabar, \bbetabar, \ggamabar}$ as cobordisms from $\#^{\ell_{1}}S^{1} \times S^{2}$ to $Y^{\#}$.
\end{prop}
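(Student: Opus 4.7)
The plan is to reach the subordinate-to-bouquet form by performing handle-slides within the $\ggamabar$ family (and, if necessary, within $\bbetabar$), each of which is a slide-equivalence of the full triple and hence preserves the spine $X_{\aalphabar, \bbetabar, \ggamabar}$ up to diffeomorphism rel boundary. First I would pin down the expected number $\ell$ of link components: since filling $-Y_{\bbetabar, \ggamabar}$ with $\natural^{\ell_2}(S^1 \times B^3)$ and a $4$-handle turns the spine into a purely $2$-handle cobordism from $\#^{\ell_1}S^1 \times S^2$ to $Y^{\#}$, a direct Euler-characteristic computation (equivalently, a rank count on $H_1$ of the three boundary components) pins down $\ell$ uniquely.

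Second, I would invoke the Laudenbach-Poenaru theorem \cite{laudenbach1972note} applied to $(\Sigmabar, \bbetabar, \ggamabar)$, which by Corollary \ref{cor:alpha-beta-gamma-curves} presents $\#^{\ell_2}S^1 \times S^2$: since $\bbetabar$ and $\ggamabar$ determine diffeomorphic handlebodies on the corresponding side, finitely many handle-slides among the $\ggamabar$-curves bring $g(\Sigmabar) - \ell$ of them into small isotopic translates of the corresponding $\bbetabar$-curves. The remaining $\ell$ pairs can then be arranged so that, after surgery along the parallel pairs, they sit inside $\ell$ disjoint once-punctured tori, each pair meeting transversely in a single point. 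Surgering along these parallel $\bbetabar$-curves converts $(\Sigmabar, \aalphabar, \bbetabar)$ into a standard diagram for $\#^{\ell_1}S^1 \times S^2$ in which the residual $\ell$ $\bbetabar$-curves become meridians of an $\ell$-component framed link $\L$, with framings recorded by the companion $\ggamabar$-curves; this verifies conditions \ref{itm:bouq-1}--\ref{itm:bouq-5} of Definition \ref{def:Heeg-triple-sub-to-bouquet}. Lemma \ref{lem:sub-bouq-2-handle-attachment} then identifies the $2$-handle cobordism $W(\#^{\ell_1}S^1 \times S^2, \L)$ with $X_{\aalphabar, \bbetabar, \ggamabar}$ as cobordisms from $\#^{\ell_1}S^1 \times S^2$ to $Y^{\#}$.

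The main obstacle is the second step: realizing the Laudenbach-Poenaru equivalence between $\bbetabar$ and $\ggamabar$ using only \emph{intra-family} handle-slides (slides of $\ggamabar$-curves over other $\ggamabar$-curves, and of $\bbetabar$-curves over other $\bbetabar$-curves), since cross-family slides would in general alter the associated four-manifold. A secondary subtlety is matching framings: one must verify that the homology classes of the residual $\ggamabar$-curves on the genus-$\ell$ subsurface agree with the framings of $\L$ recorded by the spine $X_{\aalphabar, \bbetabar, \ggamabar}$. Both points can be handled by naturality arguments, since each intra-family slide preserves the triple-associated four-manifold up to canonical diffeomorphism rel boundary, and the framings extracted at the end must therefore reproduce the spine we started with.
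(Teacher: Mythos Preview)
Your overall strategy matches the paper's: both arguments put the pair $(\bbetabar,\ggamabar)$ into a standard form for a genus-$g(\Sigmabar)$ Heegaard diagram of $\#^{\ell_2}S^1\times S^2$, so that some $\ggamabar$-curves become parallel to $\bbetabar$-curves and the rest become dual, and then read off the bouquet conditions directly. The paper does this by invoking \cite[Theorem 2.7]{meier2016classification} to produce a new cut system $\ggamabar'$ for the \emph{same} handlebody $U_{\ggamabar}$ with exactly the dual/parallel pattern you describe, and then cites \cite{johannson2006topology} for the fact that any two cut systems for the same handlebody are related by intra-family slides, so $\ggamabar\sim\ggamabar'$. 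That is precisely the resolution of what you call the ``main obstacle,'' and it is not Laudenbach--Po\'enaru.

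This is the one real soft spot in your write-up. Laudenbach--Po\'enaru says diffeomorphisms of $\#^k S^1\times S^2$ extend over $\natural^k S^1\times B^3$; it does not by itself guarantee that you can reach the standard dual/parallel form using only slides of $\ggamabar$-curves over $\ggamabar$-curves (and $\bbetabar$ over $\bbetabar$). What you need is the Waldhausen-type standard form for Heegaard splittings of $\#^k S^1\times S^2$, packaged conveniently as in \cite[Theorem 2.7]{meier2016classification}, together with the Johannson fact that cut systems for a fixed handlebody are slide-equivalent. Your final paragraph, which proposes to dissolve the obstacle by ``naturality'' (intra-family slides preserve the four-manifold), is circular: that observation tells you the four-manifold is unchanged \emph{if} you can reach the standard form by such slides, but does not establish that you can. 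Once you replace the Laudenbach--Po\'enaru citation with the correct standard-form input, your argument and the paper's coincide, including the verification of \ref{itm:bouq-1}--\ref{itm:bouq-5} and the appeal to Lemma~\ref{lem:sub-bouq-2-handle-attachment}.
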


\begin{proof}
Recall from \cite[Definition 4.5]{meier2016classification} that a disk $D_{\gamma}$ properly embedded in $U_{\ggamabar}$ is \emph{primitive in $U_{\ggamabar}$ with respect to $U_{\bbetabar'}$} if there exists a compression disk $D_{\beta_{i}'}$ satisfying the condition $|D_{\gamma} \cap D_{\beta_{i}'}| = 1$. Since $(\Sigma, \bbetabar', \ggamabar)$ is a genus $\underline{g} = g + p + b -1$ Heegaard diagram for $\#^{\ell_{2}}S^{1} \times S^{2}$, it follows from \cite[Theorem 2.7]{meier2016classification} that $U_{\ggamabar}$ admits an ordered collection of compression disks $\{D_{\ggamabar_{i}'}\}$ where the corresponding attaching circles $\ggamabar_{i}' = \partial D_{\ggamabar_{i}'}$ satisfy
\begin{enumerate}
    \item For $i = 1, \ldots, g - k -p$, $\ggamabar_{i}'$ satisfies $|\ggamabar_{i}' \cap \bbetabar_{i}'| = 1$ and $|\ggamabar_{i}' \cap \bbetabar_{j}| = 0$ for $i \neq j$.
    \item For $i = g - k - p + 1, \ldots, g + p + b -1$, $\ggamabar_{i}'$ is parallel to $\bbetabar_{i}'$. 
\end{enumerate}
We remark that since $\ggamabar'$ and $\ggamabar$ are cut systems for the same handlebody $U_{\ggamabar}$, it follows from \cite{johannson2006topology} that $\ggamabar \sim \ggamabar'$.

This being done, it follows from \cite[p.5]{kim2020trisections} (see, in particular \cite[Figure 2]{kim2020trisections}) that for $i = 1, \ldots, g - k - p$, $\ggamabar_{i}'$ can be interpreted as the framed attaching sphere for a $2$-handle cobordism, where each $\ggamabar_{i}'$ is given the surface framing.

Finally, we exhibit a bouquet for the framed attaching link $\L = \{\ggamabar_{1}', \ldots, \ggamabar_{g-p-k}'\}$ and check that $(\Sigmabar, \aalphabar', \bbetabar', \ggamabar')$ is subordinate to it. For each $\ggamabar_{i}' \in \{\ggamabar_{1}', \ldots, \ggamabar_{g-k-p}'\}$, choose a properly embedded arc $\eta_{i} \subset U_{\bbetabar}$ which has one endpoint on $\ggamabar_{i}'$ and the other on $w$, the fixed basepoint. Then the union of $\eta_{i}$ comprise a bouquet for the link $\L$. Furthermore, $(\Sigmabar, \{\aalphabar_{1}, \ldots, \aalphabar_{g + p + b -1}'\}, \{\bbetabar_{g-p-k+1}', \ldots, \bbetabar_{g + p + b -1}'\}$ is a Heegaard diagram for the complement of $\L$ in $\#^{\ell_{1}}S^{1} \times S^{2}$. Next, taking a thin tubular neighborhood of $\bbetabar_{i}' \cup \ggamabar_{i}'$ constitutes a punctured torus for each $i = 1, \ldots, g - k - p$. Last, the conditions that $\bbetabar_{i}'$ constitute a meridian and that $\ggamabar_{i}'$ constitute a longitude are self evident after using the surface framing to push $\ggamabar_{i}'$ into $U_{\bbetabar}$ handlebody. Thus, the conditions \ref{itm:bouq-1} -- \ref{itm:bouq-5} are satisfied.
\end{proof}

The remainder of the proof of Theorem \ref{thm:Main} is an application of various naturality results which are standard in the Heegaard Floer theory. We recapitulate some of the details here, but we claim no originality to them--see, for example, \cite[p.360]{ozsvath2004holomorphic} for what is essentially the same argument and \cite{juhasz2012naturality} for more details concerning naturality issues in Heegaard Floer homology. Following \cite{juhasz2012naturality}, we make a notational definition. 

\begin{definition}
Let $(\Sigma, \aalpha, \bbeta, \bbeta')$ be an admissable triple diagram. If $\bbeta \sim \bbeta'$ are handle-slide equivalent, then we'll write $\Psi^{\aalpha}_{\bbeta \to \bbeta'}$ for the map
\begin{equation}
    F^{\circ}_{\aalpha, \bbeta, \bbeta'}(- \otimes \Theta_{\bbeta, \bbeta'}): HF^{\circ}(\Sigma, \aalpha, \bbeta) \to HF^{\circ}(\Sigma, \aalpha, \bbeta')
\end{equation}
Similarly, if $\aalpha' \sim \aalpha$, then let $\Psi^{\aalpha' \to \aalpha}_{\ggama}$ denote the map
\begin{equation}
    F^{\circ}_{\aalpha', \aalpha, \ggama}(\Theta_{\aalpha', \aalpha} \otimes -): HF^{\circ}(\Sigma, \aalpha', \ggama) \to HF^{\circ}(\Sigma, \aalpha, \ggama)
\end{equation}
\end{definition}

We take a moment to compare $\spinc$-structures on $X$ to those on $X^{\#}$. Observe that there is a natural restriction map
\begin{equation}
    \label{eq:restrict-spinc}
    r: \spinc(X) \to \spinc(X^{\#})
\end{equation}
The restriction map $r$ is surjective, and conversely, a $\spinc$-structure $\ss^{\#}$ on $X^{\#}$ admits a unique extension to $X$ if it is isomorphic to the unique torsion $\spinc$-structure $\ss_{0}$ in a neighborhood of $\#^{k_{3}}S^{1} \times S^{2}$. 

\begin{prop}
\label{prop:triangle-naturality}
Fix a $\spinc$-structure $\ss \in \spinc(X^{\#})$. Let $\H = (\Sigmabar, \aalphabar, \bbetabar, \ggamabar, w)$ be the pointed $\ss$-admissable Heegaard triple constructed as above, and let $\H' = (\Sigmabar, \aalphabar', \bbetabar', \ggamabar', w)$ be a Heegaard triple which is strongly equivalent to $\H$ and which is subordinate to a bouquet for a framed link $\L$ as in Proposition \ref{lem:strongly-equivalent} above. Then in the diagram below 
\begin{figure}[H]
    \centering
    \begin{tikzcd}
    HF^{\circ}(\Sigmabar, \aalphabar, \bbetabar, \ss_{0}) \ar{d}{\Psi_{\bbetabar \to \bbetabar'}^{\aalphabar \to \aalphabar'}} \ar{r}{F^{\circ}_{\aalphabar, \bbetabar, \ggamabar, \ss}} & HF^{\circ}(\Sigmabar, \aalphabar, \ggamabar, \ss_{\aalphabar, \ggamabar}) \ar{d}{\Psi_{\ggamabar \to \ggamabar'}^{\aalphabar \to \aalphabar'}} \\
    HF^{\circ}(\Sigmabar, \aalphabar', \bbetabar', \ss_{0}) \ar{r}{F^{\circ}_{\L, \ss}} & HF^{\circ}(\Sigmabar, \aalphabar', \ggamabar', \ss_{\aalphabar', \ggamabar'}) 
    \end{tikzcd}
    \label{fig:naturality-diagram}
\end{figure}
\noindent
we have the following equality
\begin{equation}
\label{eq:naturality}
    F^{\circ}_{\L, \ss} \circ \Psi_{\bbetabar \to \bbetabar'}^{\aalphabar \to \aalphabar'} (\Theta_{\aalphabar, \bbetabar}) = \Psi_{\ggamabar \to \ggamabar'}^{\aalphabar \to \aalphabar'} \circ F_{\aalphabar, \bbetabar, \ggamabar, \ss} (\Theta_{\aalphabar, \bbetabar})
\end{equation}
\end{prop}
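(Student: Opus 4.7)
The plan is to deduce the naturality equation from two standard ingredients in Heegaard Floer theory: the associativity of the holomorphic triangle maps established in \cite[Theorem 8.16]{ozsvath2004holomorphic}, together with the fact that the handle-slide map between standard diagrams for $\#^{k} S^{1} \times S^{2}$ sends the top-graded generator $\Theta$ to itself. Since the equation is asserted only at the cycle representing $\Theta_{\aalphabar, \bbetabar}$, it suffices to establish a chain of intermediate commutative squares, one for each elementary handle-slide comprising the composite maps on the left and right of the diagram. The argument is essentially the one given on \cite[p.~360]{ozsvath2004holomorphic} in the closed case.

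First, I would decompose $\Psi_{\bbetabar \to \bbetabar'}^{\aalphabar \to \aalphabar'}$ into two consecutive handle-slide maps, first sliding $\bbetabar \to \bbetabar'$ with $\aalphabar$ fixed, then sliding $\aalphabar \to \aalphabar'$ with $\bbetabar'$ fixed; a parallel decomposition applies to $\Psi_{\ggamabar \to \ggamabar'}^{\aalphabar \to \aalphabar'}$. This reduces the claim to three intermediate commutativity statements, obtained by applying the associativity theorem to the quadruples $(\aalphabar, \bbetabar, \bbetabar', \ggamabar)$, $(\aalphabar, \bbetabar', \ggamabar, \ggamabar')$, and $(\aalphabar, \aalphabar', \bbetabar', \ggamabar')$ in turn. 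In each case, associativity provides a chain homotopy between the two iterated compositions of triangle maps on the quadruple, and on passing to homology the identity $F^{\circ}(\Theta \otimes \Theta) = \Theta$ for triangle maps between standard $\#^{k} S^{1} \times S^{2}$ diagrams forces the two compositions to agree on the top generator. Chaining these three squares together recovers the desired equality.

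The principal technical obstacle I anticipate is ensuring admissibility for each of the Heegaard multi-diagrams appearing in these associativity steps, since only the original triple $\underline{\H}$ is assumed admissible. I would handle this in the standard way, by performing all handle-slides via sufficiently small isotopies supported in pre-chosen winding regions, so that the admissibility of $\underline{\H}$ transfers to the augmented diagrams. A secondary point of care is $\spinc$-bookkeeping along the restriction map \eqref{eq:restrict-spinc}: at each step one must verify that $\ss$ extends uniquely to the enlarged four-tuple and restricts to the canonical torsion class on the new $\#^{k} S^{1} \times S^{2}$ summand, which is precisely the hypothesis needed for the $\spinc$-refined associativity theorem to apply and for the top generators $\Theta$ to be unambiguously defined.
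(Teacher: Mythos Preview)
Your proposal is correct and follows essentially the same strategy as the paper: both proofs factor the two vertical maps into three elementary handle-slide steps, apply the associativity theorem \cite[Theorem~8.16]{ozsvath2004holomorphic} to the corresponding quadruple at each step, and use the fact that triangle maps between standard diagrams for $\#^{k} S^{1}\times S^{2}$ carry $\Theta$ to $\Theta$. The only substantive difference is the order in which the three slides are performed---the paper does $\aalphabar \to \aalphabar'$ first, then $\bbetabar \to \bbetabar'$, then $\ggamabar \to \ggamabar'$, whereas you do $\bbetabar$, $\ggamabar$, $\aalphabar$---and correspondingly the paper's factorization of $\Psi^{\aalphabar\to\aalphabar'}_{\bbetabar\to\bbetabar'}$ is $\Psi^{\aalphabar'}_{\bbetabar\to\bbetabar'}\circ\Psi^{\aalphabar\to\aalphabar'}_{\bbetabar}$ rather than your $\Psi^{\aalphabar\to\aalphabar'}_{\bbetabar'}\circ\Psi^{\aalphabar}_{\bbetabar\to\bbetabar'}$; both are valid by \cite[Proposition~9.10]{juhasz2012naturality}. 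One small correction: your third quadruple should read $(\aalphabar', \aalphabar, \bbetabar', \ggamabar')$ rather than $(\aalphabar, \aalphabar', \bbetabar', \ggamabar')$, since $\Psi^{\aalphabar\to\aalphabar'}_{\zzeta} = F^{\circ}_{\aalphabar',\aalphabar,\zzeta}(\Theta_{\aalphabar',\aalphabar}\otimes -)$ requires $\aalphabar'$ in the leftmost slot. For admissibility the paper simply quotes \cite[Lemma~9.5]{juhasz2012naturality} rather than arguing via winding regions, but your approach works as well.
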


\begin{proof}
Similar results are common in the literature, so we'll be brief (cf. \cite[p.360]{ozsvath2006holomorphic}). By assumption, the cut systems $\aalphabar \sim \aalphabar'$, $\bbetabar \sim \bbetabar'$, and $\ggamabar \sim \ggamabar'$ are related by sequences of isotopies and handleslides. Start by considering the sequence $\aalphabar \sim \aalphabar'$, which yields the following diagram:
\begin{figure}[H]
    \centering
    \begin{tikzcd}[column sep = 1in]
    HF^{\circ}(\Sigmabar, \aalphabar, \bbetabar, \ss_{0}) \ar{r}{F^{\circ}_{\aalphabar, \bbetabar, \ggamabar, \ss}} \ar{d}{\Psi^{\aalphabar \to \aalphabar'}_{\bbetabar}} & HF^{\circ}(\Sigmabar, \aalphabar, \ggamabar, \ss_{\aalphabar, \ggamabar}) \ar{d}{\Psi_{\ggamabar}^{\aalphabar \to \aalphabar'}} \\ 
        HF^{\circ}(\Sigmabar, \aalphabar', \bbetabar, \ss_{0}) \ar{r}[swap]{F^{\circ}_{\aalphabar', \bbetabar, \ggamabar, \ss}} & HF^{\circ}(\Sigmabar, \aalphabar', \ggamabar, \ss_{\aalpha', \ggama}) 
    \end{tikzcd}
    \caption{The commutative square associated to the sequence of isotopies and handle slides connecting $\aalphabar$ to $\aalphabar'$.}
    \label{fig:aalpha-chain-homotopy}
\end{figure}

\noindent
By \cite[Proposition 9.10]{juhasz2012naturality} we have that both $\Psi_{\bbetabar}^{\aalphabar \to \aalphabar'}$ and $\Psi_{\ggamabar}^{\aalphabar \to \aalphabar'}$ are isomorphisms, and by \cite[Lemma 9.4]{juhasz2012naturality} we have that $HF^{\circ}_{\text{top}}(\Sigmabar, \aalphabar, \bbetabar, \ss_{0}) \cong \F\langle \Theta_{\aalphabar, \bbetabar}\rangle$ and $HF^{\circ}_{\text{top}}(\Sigmabar, \aalphabar', \bbetabar, \ss_{0}) \cong \F\langle \Theta_{\aalphabar', \bbetabar}\rangle$. It is now immediate that $\Psi_{\bbetabar}^{\aalphabar \to \aalphabar'}(\Theta_{\aalphabar, \bbetabar}) = \Theta_{\aalphabar', \bbetabar}$.

Using \cite[Lemma 9.5]{juhasz2012naturality}, we may assume that $(\Sigma, \aalphabar', \aalphabar, \bbetabar, \ggamabar, w)$ has also been made admissable, so we can apply the associativity theorem for holomorphic triangles \cite[Theorem 8.16]{ozsvath2004holomorphic} and conclude that
\begin{equation}
\label{eq:aalpha-commutes}
    F^{\circ}_{\aalphabar', \aalphabar, \ggamabar}\big(\Theta_{\aalphabar', \aalphabar} \otimes F^{\circ}_{\aalphabar, \bbetabar, \ggamabar, \ss}(\Theta_{\aalphabar, \bbetabar} \otimes \Theta_{\bbetabar, \ggamabar})\big) = F^{\circ}_{\aalphabar', \bbetabar, \ggamabar, \ss}\big(F^{\circ}_{\aalphabar', \aalphabar, \bbetabar}(\Theta_{\aalphabar', \aalphabar} \otimes \Theta_{\aalphabar, \bbetabar}) \otimes \Theta_{\bbetabar, \ggamabar} \big)
\end{equation}
Clearly, equation \eqref{eq:aalpha-commutes} shows that the diagram in Figure \ref{fig:aalpha-chain-homotopy} commutes for the generator $\Theta_{\aalphabar, \bbetabar}$.

Having handled the sequence $\aalphabar \sim \aalphabar'$, we consider next the sequence of isotopies and handleslides amongst the $\bbetabar$-curves. In a similar fashion, we consider the following diagram
\begin{figure}[H]
    \centering
    \begin{tikzcd}[column sep = 1in]
    HF^{\circ}(\Sigmabar, \aalphabar', \bbetabar, \ss_{0}) \ar{r}{F^{\circ}_{\aalphabar', \bbetabar, \ggamabar, \ss}} \ar{d}{\Psi^{\aalphabar'}_{\bbetabar \to \bbetabar'}} & HF^{\circ}(\Sigmabar, \aalphabar', \ggamabar, \ss_{\aalphabar', \ggamabar}) \ar[equal]{d} \\ 
        HF^{\circ}(\Sigmabar, \aalphabar', \bbetabar', \ss_{0}) \ar{r}[swap]{F^{\circ}_{\aalphabar', \bbetabar', \ggamabar, \ss}} & HF^{\circ}(\Sigmabar, \aalphabar', \ggamabar, \ss_{\aalpha', \ggama}) 
    \end{tikzcd}
    \caption{The commutative square associated to the sequence of isotopies and handle slides connecting $\bbetabar$ to $\bbetabar'$.}
    \label{fig:bbeta-chain-homotopy}
\end{figure}
The proof that Figure \ref{fig:bbeta-chain-homotopy} is commutative, however, is slightly different than that for Figure \ref{fig:aalpha-chain-homotopy}, so we include the proof here. As before, we apply \cite[Lemma 9.5]{juhasz2012naturality} to justify that $(\Sigma, \aalphabar', \bbetabar, \bbetabar', \ggamabar, w)$ is admissable. Applying the associativity theorem for holomorphic triangles, we see that
\begin{equation}
    \label{eq:bbeta-commutes}
    F^{\circ}_{\aalphabar', \bbetabar', \ggamabar}\big( F^{\circ}_{\aalphabar', \bbetabar, \bbetabar'}(\Theta_{\aalphabar', \bbetabar} \otimes \Theta_{\bbetabar, \bbetabar'}) \otimes \Theta_{\bbetabar', \ggamabar} \big) = F^{\circ}_{\aalphabar', \bbetabar, \ggamabar}\big(\Theta_{\aalphabar', \bbetabar} \otimes F^{\circ}_{\bbetabar, \bbetabar', \ggamabar}(\Theta_{\bbetabar', \bbetabar} \otimes \Theta_{\bbetabar',\ggamabar}) \big)
\end{equation}
By again applying \cite[Proposition 9.10]{juhasz2012naturality} and \cite[Lemma 9.4]{juhasz2012naturality}, we observe that
\begin{equation}
    \label{eq:bbeta-square}
F^{\circ}_{\bbetabar, \bbetabar', \ggamabar}(\Theta_{\bbetabar, \bbetabar'} \otimes \Theta_{\bbetabar', \ggamabar}) = \Theta_{\bbetabar, \ggamabar}
\end{equation}
which turns equation \eqref{eq:bbeta-square} into
\begin{equation}
    \label{eq:final-bbeta}
    F^{\circ}_{\aalphabar', \bbetabar', \ggamabar}\big( F^{\circ}_{\aalphabar', \bbetabar, \bbetabar'}(\Theta_{\aalphabar', \bbetabar} \otimes \Theta_{\bbetabar, \bbetabar'}) \otimes \Theta_{\bbetabar', \ggamabar} \big) = F^{\circ}_{\aalphabar', \bbetabar, \ggamabar}(\Theta_{\aalphabar', \bbetabar} \otimes \Theta_{\bbetabar, \ggamabar})
\end{equation}
It is immediate from equation \eqref{eq:final-bbeta} that Figure \ref{fig:bbeta-chain-homotopy} commutes for the generator $\Theta_{\aalphabar', \bbetabar}$.

Having studied the sequences $\aalphabar \sim \aalphabar'$ and $\bbetabar \sim \bbetabar'$, we leave it to the reader to build an analogous commutative diagram for the sequence $\ggamabar \sim \ggamabar'$ and top generator $\Theta_{\aalphabar', \ggamabar}$. The proof that it is commutative follows as for the sequence $\bbetabar \sim \bbetabar'$.

To demonstrate the assertion made in the proposition, we observe that after stacking Figures \ref{fig:aalpha-chain-homotopy} and \ref{fig:bbeta-chain-homotopy} on top of the appropriate diagram for the $\ggamabar \sim \ggamabar'$ sequence, we arrive at a new commutative diagram which is equivalent to equation \eqref{eq:naturality}. This is so for two reasons: first, by Definition the maps $F^{\circ}_{\aalphabar', \bbetabar', \ggamabar', \ss}$ and $F^{\circ}_{\L, \ss}$ are equivalent, and second, by \cite[Proposition 9.10]{juhasz2012naturality} we have
\[
\Psi^{\aalphabar \to \aalphabar'}_{\bbetabar \to \bbetabar'} = \Psi^{\aalphabar'}_{\bbetabar \to \bbetabar'} \circ \Psi^{\aalphabar \to \aalphabar'}_{\bbetabar} \qquad \text{ and } \qquad \Psi^{\aalphabar \to \aalphabar'}_{\ggamabar \to \ggamabar'} = \Psi^{\aalphabar'}_{\ggamabar \to \ggamabar'} \circ \Psi^{\aalphabar \to \aalphabar'}_{\ggamabar}.
\]
\end{proof}

\begin{theorem}
\label{thm:rel-invt}
In the diagram below,
\begin{figure}[H]
    \centering
    \begin{tikzcd}[column sep = 1in]
    HF^{\circ}(S^{3}) \ar{d}{F_{1}} \ar{r}{F^{\circ}_{X, \ss}} & HF^{\circ}(Y, \ss) \\
    HF^{\circ}(\Sigmabar, \aalphabar, \bbetabar, \ss_{0}) \ar{r}[swap]{F^{\circ}_{\aalphabar, \bbetabar, \ggamabar, \ss_{\aalphabar, \bbetabar, \ggamabar}}} & HF^{\circ}(\Sigmabar, \aalphabar, \ggamabar, \ss_{\aalphabar, \ggamabar}) \ar{u}{F_{3}} 
    \end{tikzcd}
    \label{fig:main-relative-invariant-diagram}
\end{figure}
\noindent
the following equality holds
\begin{equation}
    \label{eq:main-relative-invariant-equality}
    F_{3} \circ F^{\circ}_{\aalphabar, \bbetabar, \ggamabar, \ss_{\aalphabar, \bbetabar, \ggamabar}}\circ F_{1} (\Theta) = F^{\circ}_{X, \ss}(\Theta)
\end{equation}
\end{theorem}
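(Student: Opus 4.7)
The plan is to decompose $X$, viewed as a cobordism from $S^{3}$ to $Y$, into the three pieces naturally suggested by Proposition \ref{prop:Xsharp} and Remark \ref{cor:Xsharp-X}, identify each Ozsv\'ath--Szab\'o cobordism map in that decomposition with the corresponding map in the diagram, and then glue them together using the composition law \cite[Theorem 3.4]{ozsvath2006holomorphic}. Concretely, I would factor $X$ as
\[
S^{3} \xrightarrow{\;W_{1}\;} \#^{\ell_{1}} S^{1}\times S^{2} \xrightarrow{\;W_{2}\;} Y^{\#} = Y\,\#\,(\#^{k_{3}}S^{1}\times S^{2}) \xrightarrow{\;W_{3}\;} Y,
\]
where $W_{1}$ consists of the $1$-handles producing $\#^{\ell_{1}}S^{1}\times S^{2}$, the middle piece $W_{2}$ is $X_{\aalphabar,\bbetabar,\ggamabar}$ after filling $-Y_{\bbetabar,\ggamabar}$ with $\natural^{\ell_{2}}S^{1}\times B^{3}$ and a $4$-handle (this is precisely $\Xbar\cong X^{\#}$ by Proposition \ref{prop:Xsharp}), and $W_{3}$ attaches the $k_{3}$ three-handles (and a $4$-handle) that fill the $\#^{k_{3}}S^{1}\times S^{2}$ summand to recover $X$, as guaranteed by Remark \ref{cor:Xsharp-X}. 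The composition law then gives $F^{\circ}_{X,\ss} = F^{\circ}_{W_{3},\ss|_{W_{3}}}\circ F^{\circ}_{W_{2},\ss|_{W_{2}}}\circ F^{\circ}_{W_{1},\ss|_{W_{1}}}$, and the extension/restriction discussion around \eqref{eq:restrict-spinc} handles the $\spinc$ bookkeeping since $\ss|_{W_{1}}$ and $\ss|_{W_{3}}$ are forced to be the torsion $\spinc$-structure on the $S^{1}\times S^{2}$ summands.

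Next I would identify each factor with the corresponding arrow in the diagram. For $F^{\circ}_{W_{1},\ss|_{W_{1}}}$: by the definition \eqref{eq:1-handle-map} of the $1$-handle map, its image on the generator of $HF^{\circ}(S^{3})$ is precisely the top generator $\Theta_{\aalphabar,\bbetabar}\in HF^{\circ}(\Sigmabar,\aalphabar,\bbetabar,\ss_{0})$ under any admissible pointed Heegaard diagram for $\#^{\ell_{1}}S^{1}\times S^{2}$; Corollary \ref{cor:alpha-beta-gamma-curves} tells us that $(\Sigmabar,\aalphabar,\bbetabar,w)$ is exactly such a diagram, so $F_{1}(\Theta)=\Theta_{\aalphabar,\bbetabar}$. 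For $F^{\circ}_{W_{3},\ss|_{W_{3}}}$: again by Corollary \ref{cor:alpha-beta-gamma-curves}, $(\Sigmabar,\aalphabar,\ggamabar,w)$ is a Heegaard diagram for $Y^{\#}$, and the $3$-handle map as defined in Section \ref{subsubsec:1-and-3-handle-maps} is computed on this diagram by projecting to the factor dual to the top generator of $H_{\ast}(T^{k_{3}})$, which is by definition what the map labeled $F_{3}$ in the diagram does.

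The heart of the argument is the identification of the middle cobordism map $F^{\circ}_{W_{2},\ss|_{W_{2}}}$ with the triangle map $F^{\circ}_{\aalphabar,\bbetabar,\ggamabar,\ss_{\aalphabar,\bbetabar,\ggamabar}}$ evaluated at $\Theta_{\aalphabar,\bbetabar}$. By Proposition \ref{lem:strongly-equivalent}, the Heegaard triple $\H=(\Sigmabar,\aalphabar,\bbetabar,\ggamabar,w)$ is slide-equivalent to a triple $\H'=(\Sigmabar,\aalphabar',\bbetabar',\ggamabar',w)$ that is subordinate to a bouquet $B(\L)$ for a framed link $\L\subset \#^{\ell_{1}}S^{1}\times S^{2}$ realizing exactly the $2$-handle cobordism $W_{2}$. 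By definition (Lemma \ref{lem:sub-bouq-2-handle-attachment} and equation \eqref{eq:hol-triangles}), the $2$-handle cobordism map is $F^{\circ}_{\L,\ss}(\Theta_{\aalphabar',\bbetabar'})$. Proposition \ref{prop:triangle-naturality} then asserts exactly that the triangle map computed on $\H$ at $\Theta_{\aalphabar,\bbetabar}$ agrees, after composing with the slide-equivalence isomorphisms $\Psi^{\aalphabar\to\aalphabar'}_{\bbetabar\to\bbetabar'}$ and $\Psi^{\aalphabar\to\aalphabar'}_{\ggamabar\to\ggamabar'}$, with $F^{\circ}_{\L,\ss}(\Theta_{\aalphabar',\bbetabar'})$; since $\Psi^{\aalphabar\to\aalphabar'}_{\bbetabar\to\bbetabar'}(\Theta_{\aalphabar,\bbetabar})=\Theta_{\aalphabar',\bbetabar'}$ by uniqueness of top generators on admissible Heegaard diagrams for $\#^{\ell_{1}}S^{1}\times S^{2}$ \cite[Lemma 9.4]{juhasz2012naturality}, and the slide-equivalence isomorphism on the $(\aalphabar,\ggamabar)$ side is precisely the canonical identification of $HF^{\circ}$ invoked by naturality, the two maps agree.

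Assembling, we get
\[
F^{\circ}_{X,\ss}(\Theta)=F_{3}\circ F^{\circ}_{W_{2},\ss}\circ F_{1}(\Theta)=F_{3}\circ F^{\circ}_{\aalphabar,\bbetabar,\ggamabar,\ss_{\aalphabar,\bbetabar,\ggamabar}}(\Theta_{\aalphabar,\bbetabar})=F_{3}\circ F^{\circ}_{\aalphabar,\bbetabar,\ggamabar,\ss_{\aalphabar,\bbetabar,\ggamabar}}\circ F_{1}(\Theta),
\]
which is \eqref{eq:main-relative-invariant-equality}. The main obstacle I expect is not the algebraic piecing together, which is largely forced by Propositions \ref{prop:Xsharp}--\ref{prop:triangle-naturality}, but the careful \textbf{$\spinc$ bookkeeping}: one must verify that the Spin$^{c}$ structure $\ss_{\aalphabar,\bbetabar,\ggamabar}$ arising from the triangle map on $\underline{\H}$ really corresponds, under the diffeomorphism of Proposition \ref{prop:Xsharp} and the restriction \eqref{eq:restrict-spinc}, to the restriction of a given $\ss\in\spinc(X)$ to the $2$-handle sub-cobordism $W_{2}$, and that extensions/restrictions across the $1$- and $3$-handle pieces are torsion on the $S^{1}\times S^{2}$ summands. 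Provided this matching is checked, the theorem follows by concatenating the three identifications along the handle decomposition.
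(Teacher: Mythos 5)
Your proposal is correct and takes essentially the same route as the paper: the paper's own (terse) proof likewise combines the construction of $X^{\#}$, Proposition \ref{prop:triangle-naturality}, the definitions of the $1$- and $3$-handle maps, and the independence of $F^{\circ}_{X,\ss}$ from the handle decomposition, and your write-up simply makes the factorization $W_{1}\cup W_{2}\cup W_{3}$ and the composition law explicit, including the $\spinc$ matching via \eqref{eq:restrict-spinc}. (Minor slip that does not affect the argument: the piece from $Y^{\#}$ to $Y$ consists of $k_{3}$ three-handles only, with no $4$-handle, since $Y\neq\emptyset$ persists as boundary.)
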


\begin{proof}
This follows immediately after combining the construction of $X^{\#}$ with Proposition \ref{prop:triangle-naturality}, the definitions of the $1$- and $3$-handle cobordism maps, and the classic results of \cite{ozsvath2006holomorphic} which show that $F_{X, \ss}$ is independent of the handle decomposition of $X$.
\end{proof}

\subsection{Remarks on the contact class}
\label{subsec:contact-class-future-work}

Fix $X$ to be a smooth, oriented, compact four-manifold with connected boundary $\partial X =Y$. As discussed in Section \ref{subsec:relative-trisection-diagrams}, a $(g,k;p,b)$-trisection map $\pi: X \to \disk$ induces an open book decomposition on its boundary $3$-manifold $Y$. Given the data of such an open book, Honda-Kazez-Matic \cite{honda2009contact} define a class $c(\xi) \in HF^{+}(-Y, \ss_{\xi})$ and show that $c^{+}(\xi)$ agrees with the Ozsv\'ath-Szab\'o contact invariant \cite{ozsvath2005heegaard} associated to $(Y, \xi)$, where $\xi$ is a contact structure supported by the given open book. In this section, we initiate a study of the relationship between $c^{+}(\xi)$ and relative trisection maps $\pi$ inducing an open book which supports $\xi$.

To begin, fix a $(g,k;p.b)$-trisection map $\pi: X \to \disk$, and let $(\Sigma, \aalpha, \bbeta, \ggama)$ be its associated diagram. Next, construct the pointed Heegaard triple\footnote{We have intentionally flipped the roles of $\aalphabar$, $\bbetabar$, and $\ggamabar$ in this construction, as will be apparent momentarily.} $(\Sigmabar, \bbetabar, \ggamabar, \aalphabar, w)$ as in Subsection \ref{subsec:Heeg-Triples} above. Following \cite[Section 2.2]{baldwin2013capping}, define for each $i = g-p+1, \ldots, g(\Sigmabar) = g + p + b - 1$ the intersection points $\theta_{i}$, $x_{i}$, and $y_{i}$, as shown in Figure \ref{figure:local-contact-class} below,
\begin{align}
    & \theta_{i} = \bbeta_{i} \cap \ggama_{i} \cap \overline{\Sigma}_{\aalpha} \nonumber \\
    & x_{i} = \ggama_{i} \cap \aalpha_{i} \cap \overline{\Sigma}_{\aalpha} \label{eq:corner-points} \\
    & y_{i} = \bbeta_{i} \cap \aalpha_{i} \cap \overline{\Sigma}_{\aalpha} \nonumber
\end{align}
and let $\Theta$, $\xx$, and $\yy$ be the corresponding intersection points
\begin{align}
    & \Ttheta = \{\Theta_{\bbeta, \ggama}^{(1)}, \ldots, \Theta_{\bbeta, \ggama}^{(g-p)}, \theta_{g-p+1}, \ldots, \theta_{g+p + b - 1}\} \nonumber \in \tbbar \cap \tgbar \\
    & \xx = \{\Theta_{\ggama, \aalpha}^{(1)}, \ldots, \Theta_{\ggama, \aalpha}^{(g-p)}, x_{g-p+1}, \ldots, x_{g + p + b - 1}\} \label{eq:intersection-points} \in \tgbar \cap \tabar \\
    & \yy = \{\Theta_{\bbeta, \aalpha}^{(1)}, \ldots, \Theta_{\bbeta, \aalpha}^{(g-p)}, y_{g-p+1}, \ldots, y_{g + p + b - 1}\} \in \tbbar \cap \tabar \nonumber
\end{align}
To describe the symbols $\Theta_{\xxi, \zzeta}^{(i)}$, for $i = 1, \ldots, g-p$ and $\xxi, \zzeta \in \{\aalpha, \bbeta, \ggama\}$, recall that by the connect sum formula \cite{ozsvathPropsApps} and Corollary \ref{cor:alpha-beta-gamma-curves}, it follows that
\begin{equation}
    \label{eq:connect-sum-ba-gb}
HF^{+}(\Sigmabar, \bbetabar, \aalphabar, \ss_{0}) \cong HF^{+}(\Sigmabar, \ggamabar, \bbetabar, \ss_{0}) \cong \Lambda^{\ast}(H_{1}(\#^{k + 2p + b -1}S^{1} \times S^{2})) \otimes \F[U, U^{-1}]/U\cdot \F[U]
\end{equation}
and
\begin{equation}
    \label{eq:connect-sum-Y-ag}
    HF^{+}(\Sigmabar, \ggamabar, \aalphabar, \ss \# \ss_{0}) \cong HF^{+}(Y, \ss) \otimes HF^{+}(\#^{k}S^{1} \times S^{2};\ss_{0})
\end{equation}
With these observations in mind, we choose the $\Theta_{\xxi, \zzeta}^{(i)}$ so that they represent the top-degree homology class in these decompositions.

\begin{figure}[H]

    \centering
    \resizebox{0.5\textwidth}{!}{%
  \begin{tikzpicture}

\node[anchor=south west,inner sep=0] at (0,0) {\includegraphics{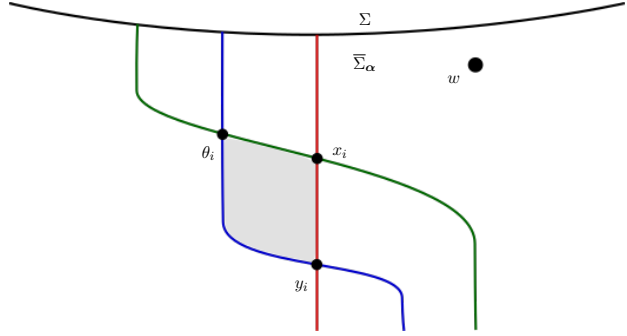}};

%\draw[help lines] (-0.21,0) grid (14,8);

%% This is done by drawing a label above and right
%% of the vertex you're labeling, rather than giving
%% the coordinates of the label itself.
%% Alternately, if you wanted to position the label
%% yourself, you could use a command like:

\node[label=below:{$w$}] at (10,6){};
\node[label=right:{$x_{i}$}] at (7,4){};
\node[label=left:{$y_{i}$}] at (7,1){};
\node[label = left:{$\theta_{i}$}] at (4.9,4){};

% \node[label = left:{$A$}] at (4.1, 6){};
% \node[label = left:{$B$}] at (6.3, 5.5){};
% \node[label = above:{$C$}] at (4.1, 1){};
% \node[label = left:{$D$}] at (6.3, 3){};
% \node[label = right:{$C$}] at (7.4, 5.1){};
% \node[label = right:{$E$}] at (7.3, 2.7){};
% \node[label = right:{$F$}] at (7.3, 0.6){};

\node[label = below:{$\overline{\Sigma}_{\bm{\alpha}}$}] at (8, 6.5){};
\node[label = above:{$\Sigma$}] at (8,6.6){};
%%
%% \node at (2.2,4.1){$(0,0)$};
%%
\end{tikzpicture}
}
\caption{A local picture of the intersection points $\theta_{i}$, $x_{i}$, and $y_{i}$.}
\label{figure:local-contact-class}
\end{figure}

Given the above familiar setting, we'd like to make a few remarks:
\begin{itemize}
    \item As in \cite{honda2009contact}, the generator $[\xx, 0]$ is a cycle in $CF^{+}(\Sigmabar, \ggamabar, \aalphabar, w)$, and its image in homology is mapped to $c^{+}(Y, \xi) \in HF^{+}(-Y, \ss_{\xi})$ under the $3$-handle cobordism map. That is,
    \begin{figure}[H]
    \centering
\begin{tikzcd}[row sep = 0.05in]
    HF^{+}(-Y^{\#}, \ss_{\xi} \# \ss_{0}) \ar{r}{F_{3}} & HF^{+}(-Y, \ss_{\xi}) \\
    \left[\xx,0\right] \ar{r} & c^{+}(\xi)
\end{tikzcd}
\end{figure}
\noindent
In particular, the image of $c^{+}(Y, \xi)$ under $F^{+}_{\overline{X}, \ss}$ coincides with the image of $[\xx,0]$ in homology under the map $F^{+}_{\bbetabar, \ggamabar, \aalphabar}(\Theta_{\bbetabar, \ggamabar} \otimes -)$.

\item As in \cite[Proposition 2.3]{baldwin2013capping}, there may be some usefulness in the way $(\Sigmabar, \bbetabar, \ggamabar, \aalphabar, w)$ is constructed in that, if one is concerned only with $\spinc$-structures on $X$ which restrict to the one arising from $\xi$, then any holomorphic representative which contributes to the cobordism map must have components which look like the shaded triangle in Figure \ref{figure:local-contact-class} above.
\end{itemize}

%%%%%%%%%%%%%%%%%%%  PART I: TRISECTIONS AND HEEGAARD FLOER HOMOLOGY  %%%%%%%%%%%%%%%%%%%%%%%%%%%%%%%%%%%%

%%%%% SECTION 1: TRISECTIONS OF FOUR-MANIFOLDS  %%%%%%

% \input{Trisections-Background/TrisectionsBackground.tex}

% \input{Trisections-Background/Maps-to-surfaces/MapsToSurfaces}

% \input{Trisections-Background/Maps-to-surfaces/ParallelTransportVanishingSets}

% \input{Trisections-Background/Closed-trisections/Closed-trisections}

% \input{Trisections-Background/Relative-trisections/RelativeTrisections}

% \input{Trisections-Background/Near-symplectic}

%%%% SECTION 2: BACKGROUND ON HEEGAARD FLOER HOMOLOGY

% \input{Heegaard-Floer-Background/HeegaardFloerBackground.tex}
 
 %\input{Heegaard-Floer-Background/HFNaturality}

% \input{Heegaard-Floer-Background/TwistedCoefficients.tex}

%%%% SECTION 3: TRISECTIONS AND RELATIVE OZSVATH SZABO 4-MANIFOLD INVARIANTS

% \input{Relative-Invariants/RelativeInvariants.tex}

% \input{Relative-Invariants/ImageContactClass.tex}

%%%% SECTION 4: TRISECTIONS AND CLOSED OZSVATH SZABO 4-MANIFOLD INVARIANTS

% \input{Closed-Invariants/ClosedInvariants.tex}

%\input{Mixed-Map/MixedMap.tex}

% \input{BranchedCovers/BranchedCovers.tex}

%%%%% SECTION 5: FUTURE WORK

% \input{Future-Work/FutureWork}

%%%%%%%%%%%% PRINT BIBLIOGRAPHY  %%%%%%%%%%%%
\newpage
\printbibliography[title = {Bibliography}]

\typeout{get arXiv to do 4 passes: Label(s) may have changed. Rerun}
\end{document}